\newtheorem{theorem}{Theorem}[section]
\newtheorem{corollary}{Corollary}[section]
\newtheorem{lemma}{Lemma}[section]
\theoremstyle{definition}
\newtheorem{definition}{Definition}[section]
\theoremstyle{remark}
\newtheorem{remark}{Remark}[section]
\numberwithin{equation}{section}
\newtheorem{case}{Case}[section]
\thanks{$^*$ Corresponding author : N. Magesh (nmagi\_2000@yahoo.co.in).
\ \newline
The authors would like to thank Prof. H. Orhan, Department of Mathematics, Faculty of Science, Ataturk University, 25240 Erzurum, Turkey for his guidance and support.
\ \newline
2010 Mathematics Subject Classification : 30C45; 30C50. \ \newline
\textit{Keywords and Phrases}:  Bi-univalent functions, bi-convex functions, Fekete-Szeg\"{o} inequalities, Hankel determinants.}
\begin{document}
\title[Hankel Determinant functionals for bi-univalent functions]{Fekete-Szeg%
\"{o} problem and Second Hankel Determinant for a class of bi-univalent functions}
\author{N. Magesh and J. Yamini}
\address{Post-Graduate and Research Department of Mathematics
\newline Government Arts College for Men
\newline Krishnagiri 635001, Tamilnadu, India.
\newline E-Mail address: $nmagi\_2000@yahoo.co.in$
}
\address{Department of Mathematics,\newline Government First Grade College
\newline Vijayanagar, Bangalore-560104, Karnataka, India.
\newline E-Mail address: $yaminibalaji@gmail.com$
}
\maketitle

\begin{abstract}
In this sequel to the recent work (see Azizi et al., 2015), we investigate
a subclass of analytic and bi-univalent functions in the open unit disk.
We obtain bounds for initial coefficients, the Fekete-Szeg\"{o} inequality and
the second Hankel determinant inequality for functions belonging to this
subclass. We also discuss some new and known special cases, which can be
deduced from our results.
\end{abstract}


\section{Introduction}

Let $\mathcal{A}$ denote the class of functions of the form
\begin{equation}
f(z)=z+\sum\limits_{n=2}^{\infty }a_{n}z^{n}  \label{Int-e1}
\end{equation}%
which are analytic in the open unit disc $\mathbb{U}=\{z:z\in \mathbb{C}\,\,%
\mathrm{and}\,\,|z|<1\}$ and let $\mathcal{S}$ denote the class of functions
in $\mathcal{A}$ that are univalent in $\mathbb{U}.$

For two functions $f$ and $g,$ analytic in $\mathbb{U},$ we say that the
function $f$ is subordinate to $g$ in $\mathbb{U},$ and write $f\prec g,$ if
there exists a Schwarz function $w,$ analytic in $\mathbb{U},$ with $w(0)=0$
and $|w(z)|<1$ such that $f(z)=g(w(z))$; $z,w\in \mathbb{U}$. In particular,
if the function $g$ is univalent in $\mathbb{U},$ the above subordination is
equivalent to $f(0)=g(0)$ and $f(\mathbb{U})\subset g(\mathbb{U}).$


Let $\varphi $ be an analytic and univalent function with positive real part
in $\mathbb{U}$, $\varphi (0)=1,$ $\varphi ^{\prime }(0)>0$ and $\varphi $
maps the unit disk $\mathbb{U}$ onto a region starlike with respect to $1$
and symmetric with respect to the real axis. The Taylor's series expansion
of such function is
\begin{equation}
\varphi (z)=1+B_{1}z+B_{2}z^{2}+B_{3}z^{3}+\dots ,  \label{varphi-expression}
\end{equation}%
where all coefficients are real and $B_{1}>0.$ Throughout this paper we
assume that the function $\varphi $ satisfies the above conditions unless
otherwise stated.

By $\mathcal{S}^{\ast }(\varphi )$ and $\mathcal{K}(\varphi )$ we denote the
following classes:
\begin{equation*}
\mathcal{S}^{\ast}(\varphi ):=\left\{ f\in \mathcal{S}:\,\,\frac{zf^{\prime
}(z)}{f(z)}\prec \varphi (z);\,\,z\in \mathbb{U}\right\}
\end{equation*}%
and
\begin{equation*}
\mathcal{K}(\varphi ):=\left\{ f\in \mathcal{S}:1+\frac{zf^{\prime \prime
}(z)}{f^{\prime }(z)}\prec \varphi (z);\,\,z\in \mathbb{U}\right\} .
\end{equation*}

The classes $\mathcal{S}^{\ast }(\varphi )$ and $\mathcal{K}(\varphi )$ are
the extensions of a classical set of starlike and convex functions (e.g. see
Ma and Minda \cite{Ma-Minda}). For $0 \leq \beta < 1,$ the classes $\mathcal{%
S}^{\ast}(\beta):= \mathcal{S}^{\ast}\left(\frac{1+(1-2\beta)z}{1+z}\right)$
and $\mathcal{K}(\beta):= \mathcal{K}\left(\frac{1+(1-2\beta)z}{1+z}\right)$
are starlike and convex functions of order $\beta$.

It is well known (e.g. see Duren \cite{Duren}) that every function $f\in
\mathcal{S}$ has an inverse map $f^{-1},$ defined by
$f^{-1}(f(z))=z,$ $z\in \mathbb{U}$ and
$f(f^{-1}(w))=w,$ $(|w|<r_{0}(f);\,\,r_{0}(f)\geqq \frac{1}{4}),$
where
\begin{equation}
f^{-1}(w)=w-a_{2}w^{2}+(2a_{2}^{2}-a_{3})w^{3}-(5a_{2}^{3}-5a_{2}a_{3}+a_{4})w^{4}+\ldots .
\label{Int-f-inver}
\end{equation}

A function $f\in \mathcal{A}$ is said to be bi-univalent in $\mathbb{U}$ if
both $f$ and $f^{-1}$ are univalent in $\mathbb{U}.$ We let $\sigma $ denote
the class of bi-univalent functions in $\mathbb{U}$ given by (\ref{Int-e1}).
A function $f$ is said to be bi-starlike of
Ma-Minda type or bi-convex of Ma-Minda type if both $f$ and $f^{-1}$ are,
respectively, of Ma-Minda starlike or convex type. These classes are
denoted, respectively, by $\mathcal{S}_{\sigma }^{\ast }(\varphi )$ and $%
\mathcal{K}_{\sigma }(\varphi )$ (see \cite{Ali-Ravi-Ma-Mina-class}).
For $0\leqq \beta <1,$ a function $f\in \sigma $ is in the class $S_{\sigma
}^{\ast }(\beta)$ of bi-starlike functions of order $\beta,$ or $%
\mathcal{K}_{\sigma}(\beta)$ of bi-convex functions of order $\beta$ if
both $f$ and its inverse map $f^{-1}$ are, respectively, starlike or convex
of order $\beta.$ For a history and examples of functions which are (or which are not) in the
class $\sigma $, together with various other properties of subclasses of
bi-univalent functions one can refer \cite{Ali-Ravi-Ma-Mina-class,Bulut,Caglar-Orhan,BAF-MKA,%
HO-NM-VKB,Peng,HMS-AKM-PG,Zaprawa}.%


For integers $n \geq 1$ and $q \geq 1,$ the $q-$th Hankel determinant,
defined as
\begin{equation*}  \label{HD}
H_{q}(n)=\left|%
\begin{array}{cccc}
a_{n} & a_{n+1} & \cdots & a_{n+q-1} \\
a_{n+1} & a_{n+2} & \cdots & a_{n+q-2} \\
\vdots & \vdots & \vdots & \vdots \\
a_{n+q-1} & a_{n+q-2} & \cdots & a_{n+2q-2}%
\end{array}
\right| \qquad (a_1=1).
\end{equation*}
The Hankel determinant plays an important role in the study of singularities
(see \cite{Dienes-1957}). This is also an important in the study of power
series with integral coefficients \cite{Cantor,Dienes-1957}.
The Hankel determinants $H_2(1) = a_3 - a_2^2$ and $H_2(2) = a_2a_4 - a_2^3$
are well-known as Fekete-Szeg\"{o} and second Hankel determinant functionals
respectively. Further Fekete and Szeg\"{o} \cite{Fekete-Szego} introduced
the generalized functional $a_3-\delta a_2^2,$ where $\delta$ is some real
number. In 1969, Keogh and Merkes \cite{Keogh-Merkes} discussed the
Fekete-Szeg\"{o} problem for the classes starlike and convex functions.
Recently, several authors have investigated upper bounds for the Hankel
determinant of functions belonging to various subclasses of univalent
functions \cite{SHD-Ali-2009,VKD-RRT-SHD-2014,SHD-Lee-2013,GMS-NM-SHD} and the references
therein. On the other hand, Zaprawa \cite{Zaprawa,Zaprawa-AAA} extended the
study of Fekete-Szeg\"{o} problem to certain subclasses of bi-univalent
function class $\sigma.$ Following Zaprawa \cite{Zaprawa,Zaprawa-AAA}, the
Fekete-Szeg\"{o} problem for functions belonging to various other subclasses
of bi-univalent functions were considered in \cite%
{Jay-NM-JY,HO-NM-VKB-Fekete}. Very recently, the upper bounds of $%
H_2(2)$ for the classes $S^*_{\sigma}(\beta)$ and $K_{\sigma}(\beta)$ were
discussed by Deniz et al. \cite{Deniz-SHD}. Recently, Lee et al.%
\cite{SHD-Lee-2013} introduced the following class:
\begin{equation*}
\mathcal{G}^{\lambda}(\varphi):=\left\{ f\in \mathcal{S}:(1-\lambda)f'(z) +\lambda \left(1+\frac{%
zf^{\prime \prime }(z)}{f^{\prime }(z)}\right) \prec \varphi (z);\,\,z\in \mathbb{U}\right\}
\end{equation*}
and obtained the bound for the second Hankel determinant of functions in $\mathcal{G}^{\lambda}(\varphi).$
It is interesting to note that
\begin{equation*}
\mathcal{G}^{\lambda}:= \mathcal{G}^{\lambda} \left(\frac{1+z}{1-z}\right) =
\left \{ f : f \in \mathcal{S} \, \mathrm{and}\, \Re
\left ((1-\lambda)f'(z) +\lambda \left(1+\frac{%
zf^{\prime \prime }(z)}{f^{\prime }(z)}\right)\right ) > 0 ; \,\, z \in
\mathbb{U}\right \}.
\end{equation*}
The class $\mathcal{G}^{\lambda}$ introduced by Al-Amiri and Reade \cite{Al-Amiri-Reade}.
The univalence of the functions in the class $\mathcal{G}^{\lambda}$ was investigated
by Singh et al. \cite{SS-SG-SS-2007,VS-SS-SG-2005}.

Motivated by the recent publications (especially \cite{Azizi,Deniz-SHD,Jay-NM-JY,Peng,Zaprawa,Zaprawa-AAA}), we define the following
subclass of $\sigma .$

\begin{definition}
For $0 \leq \lambda \leq 1$ and $0\leq \beta <1,$ a function $f\in \sigma$
given by (\ref{Int-e1}) is said to be in the class $\mathcal{G}%
^{\lambda}_{\sigma}(\varphi)$ if the following conditions are satisfied:
\[
(1-\lambda)f^{\prime}(z)+\lambda\left(1+\frac{zf^{\prime \prime }(z)}{f^{\prime }(z)}\right)\prec \varphi (z),\qquad
0\leqq \lambda \leqq 1,\,z\in \mathbb{U}
\]
and for $g=f^{-1}$ given by (\ref{Int-f-inver})
\[
(1-\lambda)g^{\prime}(w)+\lambda\left(1+\frac{wg^{\prime \prime }(w)}{g^{\prime }(w)}\right)\prec \varphi (w),\qquad
0\leqq \lambda \leqq 1,\,w\in \mathbb{U}.
\]
\end{definition}
From among the many choices of $\varphi$ and $\lambda$ which would provide the
following known subclasses:
\begin{enumerate}
\item $\mathcal{G}^{0}_{\sigma}(\varphi):%
=\mathcal{H}_{\sigma}(\varphi)$ \cite{Ali-Ravi-Ma-Mina-class},

\item $\mathcal{G}^{1}_{\sigma}(\varphi):%
=\mathcal{K}_{\sigma}(\varphi)$ \cite{Ali-Ravi-Ma-Mina-class},

\item $\mathcal{G}^{\lambda}_{\sigma}(\frac{%
1+(1-2\beta)z}{1-z}):=\mathcal{G}^{\lambda}_{\sigma}(\beta)$ \,\, $(0\leq \beta < 1)$ \,\, \cite{Azizi}.

\item $\mathcal{G}^{0}_{\sigma}(\frac{%
1+(1-2\beta)z}{1-z}):=\mathcal{H}^{\beta}_{\sigma}$ \,\, $(0\leq \beta < 1)$ \,\, \cite{HMS-AKM-PG}

\item $\mathcal{G}^{1}_{\sigma}(\frac{%
1+(1-2\beta)z}{1-z}):=\mathcal{K}_{\sigma}(\beta)$ \,\, $(0\leq \beta < 1)$ \,\, \cite{Brannan-1986}.
\end{enumerate}

In this paper we shall obtain the Fekete-Szeg\"{o} inequalities for $\mathcal{G}%
^{\lambda}_{\sigma}(\varphi)$ as well as its special classes. Further, the second
Hankel determinant obtained for the class $\mathcal{G}^{\lambda}_{\sigma}(\beta).$

\section{Initial Coefficient Bounds}
\begin{theorem}\label{Th-Al-Amiri}
If $f$ given by (\ref{Int-e1}) is in the class $\mathcal{G}^{\lambda}_{\sigma}(\varphi),$ then
\begin{equation}\label{th-|a2|}
|a_2| \leq \frac{B_1\sqrt{B_1}}{\sqrt{4B_1+|(3-\lambda)B_1^2-4B_2|}}
\end{equation}
and
\begin{equation}\label{th-|a3|}
\left\vert a_3\right\vert \leq \left\{
\begin{array}{ll}
\left(1-\frac{4}{3(1+\lambda)B_1}\right)\frac{B_1^3}{4B_1+|(3-\lambda)B_1^2-4B_2|}%
    +\frac{B_1}{3(1+\lambda)},\qquad & \text{if \ \ } B_1 \geq \frac{4}{3(1+\lambda)}{;} \\
&  \\
\frac{B_1}{3(1+\lambda)}, & \text{if \ \ } B_{1} < \frac{4}{3(1+\lambda)}{.}%
\end{array}%
\right.
\end{equation}
\end{theorem}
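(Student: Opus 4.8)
The plan is to realise the two subordinations through a pair of Schwarz functions and then read off $a_2$ and $a_3$ from the resulting systems. Writing $g=f^{-1}$, the defining conditions supply analytic $u(z)=c_1z+c_2z^2+\cdots$ and $v(w)=d_1w+d_2w^2+\cdots$ with $u(0)=v(0)=0$, $|u|<1$, $|v|<1$, such that the left-hand operators for $f$ and for $g$ equal $\varphi(u(z))$ and $\varphi(v(w))$. First I would expand the operator $(1-\lambda)f'(z)+\lambda\bigl(1+zf''(z)/f'(z)\bigr)$; a short computation (using $1+zf''(z)/f'(z)=(zf'(z))'/f'(z)$) gives the series $1+2a_2z+[3(1+\lambda)a_3-4\lambda a_2^2]z^2+\cdots$, and the analogous expansion for $g$, after inserting $b_2=-a_2$, $b_3=2a_2^2-a_3$ from \eqref{Int-f-inver}, is $1-2a_2w+[(6+2\lambda)a_2^2-3(1+\lambda)a_3]w^2+\cdots$. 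Matching these against $\varphi(u(z))=1+B_1c_1z+(B_1c_2+B_2c_1^2)z^2+\cdots$ and $\varphi(v(w))=1+B_1d_1w+(B_1d_2+B_2d_1^2)w^2+\cdots$ produces four scalar equations.

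The second step exploits these equations. The first-order pair yields $2a_2=B_1c_1=-B_1d_1$, so $c_1=-d_1$ and $c_1^2=4a_2^2/B_1^2$. Adding the two second-order equations eliminates $a_3$ and leaves $(6-2\lambda)a_2^2=B_1(c_2+d_2)+2B_2c_1^2$. The crucial point is that I would not bound $|c_2+d_2|$ by the crude value $2$, but instead invoke the sharp classical estimate $|c_2|\le 1-|c_1|^2$ (and likewise for $d_2$), giving $|c_2+d_2|\le 2-2|c_1|^2=2-8|a_2|^2/B_1^2$. Substituting $c_1^2=4a_2^2/B_1^2$ into the displayed identity and then feeding in this refined estimate converts the relation into an inequality in $|a_2|^2$ alone, namely $|a_2|^2\,[\,4B_1+|(3-\lambda)B_1^2-4B_2|\,]\le B_1^3$; solving for $|a_2|^2$ yields \eqref{th-|a2|}. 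It is exactly the $-|c_1|^2$ refinement that produces the summand $4B_1$ in the denominator, so this estimate is the heart of the argument and the main obstacle; the naive bound $|c_2+d_2|\le 2$ would give a strictly weaker, and in general false, inequality.

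For $|a_3|$ I would subtract the two second-order equations rather than add them; since $c_1^2=d_1^2$, this gives the clean identity $6(1+\lambda)(a_3-a_2^2)=B_1(c_2-d_2)$, that is $a_3=a_2^2+B_1(c_2-d_2)/[6(1+\lambda)]$. Estimating $|a_3|\le |a_2|^2+\frac{B_1}{6(1+\lambda)}(|c_2|+|d_2|)$ and once more using $|c_2|+|d_2|\le 2-8|a_2|^2/B_1^2$, I would collect the $|a_2|^2$ terms to reach $|a_3|\le \bigl(1-\tfrac{4}{3(1+\lambda)B_1}\bigr)|a_2|^2+\tfrac{B_1}{3(1+\lambda)}$. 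The right-hand side is monotone in $|a_2|^2$, so the final step is a case split on the sign of the coefficient $1-\frac{4}{3(1+\lambda)B_1}$: when $B_1\ge \frac{4}{3(1+\lambda)}$ this coefficient is nonnegative and I insert the maximal value of $|a_2|^2$ from \eqref{th-|a2|}, producing the first branch of \eqref{th-|a3|}; when $B_1<\frac{4}{3(1+\lambda)}$ it is negative, so the bound is largest at $|a_2|^2=0$, giving the second branch $B_1/[3(1+\lambda)]$. The only genuinely delicate point throughout is the consistent use of the sharp Schwarz estimate; the rest is bookkeeping of the four coefficient equations.
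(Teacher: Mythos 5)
Your proposal is correct and follows essentially the same route as the paper's own proof: the same Schwarz-function realisation of the two subordinations, the same four coefficient equations, the same use of the sharp estimate $|c_2|\le 1-|c_1|^2$ after adding (for $a_2$) and subtracting (for $a_3$) the second-order relations, and the same final substitution of the $|a_2|^2$ bound. The only difference is cosmetic: you spell out the sign-based case split at the end, which the paper leaves implicit in the phrase ``substituting (\ref{th-|a2|}) in (\ref{|a3|})''.
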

\begin{proof}
Suppose that $u(z)$ and $v(z)$ are analytic in the unit disk $\mathbb{U}$ with
$u(0)=v(0)=0,$ $|u(z)|<1,$ $|v(z)|<1$ and
\begin{equation}\label{u--v}
u(z)=b_1z+\sum\limits_{n=2}^{\infty}b_nz^n,\,\, v(z)=c_1z+\sum\limits_{n=2}^{\infty}c_nz^n,\qquad  |z|<1.
\end{equation}
It is well known that
\begin{equation}\label{intial-coefs}
|b_1|\leq 1, \,\, |b_2|\leq 1-|b_1|^2,\,\, |c_1|\leq 1, \,\, |c_2|\leq 1-|c_1|^2.
\end{equation}
By a simple calculation, we have
\begin{equation}\label{varphi--u}
\varphi(u(z))=1+B_1b_1z+(B_1b_2+B_2b_1^2)z^2+ \dots ,\,\,\,\, |z|<1
\end{equation}
and
\begin{equation}\label{varphi--v}
\varphi(v(w))=1+B_1c_1w+(B_1c_2+B_2c_1^2)w^2+ \dots ,\,\,\,\, |w|<1.
\end{equation}
Let $f \in \mathcal{G}^{\lambda}_{\sigma}(\varphi).$  Then there are analytic functions
$u,v : \mathbb{U} \rightarrow \mathbb{U}$ given by (\ref{u--v}) such that
\begin{equation}\label{compare--varphiu}
(1-\lambda)f^{\prime}(z)+\lambda\left(1+\frac{zf^{\prime \prime }(z)}{f^{\prime }(z)}\right)=\varphi (u(z))
\end{equation}
\noindent and
\begin{equation}\label{compare--varphiv}
(1-\lambda)g^{\prime}(w)+\lambda\left(1+\frac{wg^{\prime \prime }(w)}{g^{\prime }(w)}\right)=\varphi (v(w)).
\end{equation}
\noindent It follows from (\ref{varphi--u}), (\ref{varphi--v}), (\ref{compare--varphiu}) and (\ref{compare--varphiv}) that
\begin{align}
2a_{2} & = B_{1}b_{1} \label{fa2} \\
3(1+\lambda)a_{3}-4\lambda a_{2}^{2} & = B_{1}b_{2} +B_{2}b_{1}^{2}
\label{fa2a3} \\
-2a_{2} & = B_{1}c_{1} \label{ga2} \\
2(\lambda+3)a_{2}^{2}-3(1+\lambda)a_{3}& = B_{1}c_{2} + B_{2}c_{1}^{2}. \label{ga2a3}
\end{align}
From (\ref{fa2}) and (\ref{ga2}), we get
\begin{equation}\label{b1=c1}
b_1 = -c_1.
\end{equation}
By adding (\ref{fa2a3}) to (\ref{ga2a3}), further, using (\ref{fa2}) and (\ref{b1=c1}), we have
\begin{equation}\label{c2+b2}
(2(3-\lambda)B_{1}^2-8B_{2})a_{2}^2 = B_{1}^3(b_2+c_2).
\end{equation}
In view of (\ref{b1=c1}) and (\ref{c2+b2}), together with (\ref{intial-coefs}), we get
\begin{equation}\label{absolute-c2+b2}
|(2(3-\lambda)B_{1}^2-8B_{2})a_{2}^2| \leq 2 B_{1}^3(1-|b_1|^2).
\end{equation}
Substituting (\ref{fa2}) in (\ref{absolute-c2+b2}) we obtain
\begin{equation}\label{|a2|}
|a_2| \leq \frac{B_1\sqrt{B_1}}{\sqrt{4B_1+|(3-\lambda)B_1^2-4B_2|}}.
\end{equation}
By subtracting (\ref{ga2a3}) from (\ref{fa2a3}) and in view of (\ref{b1=c1}), we get
\begin{equation}\label{a3}
6(1+\lambda)a_{3} = 6(1+\lambda)a_{2}^2 + B_{1}(b_{2}-c_{2}).
\end{equation}
From (\ref{intial-coefs}), (\ref{fa2}), (\ref{b1=c1}) and (\ref{a3}), it follows that
\begin{eqnarray}\label{|a3|}
|a_{3}| & \leq & |a_{2}|^2 + \frac{B_{1}}{6(1+\lambda)}(|b_{2}|+|c_{2}|)\notag\\
        & \leq & |a_{2}|^2 + \frac{B_{1}}{3(1+\lambda)}(1-|b_{1}|^2)\notag\\
        & = & \left(1-\frac{4}{3(1+\lambda)B_1}\right)|a_{2}|^2 + \frac{B_{1}}{3(1+\lambda)}.
\end{eqnarray}
Substituting (\ref{|a2|}) in (\ref{|a3|}) we obtain the desired inequality (\ref{th-|a3|}).
\end{proof}
\begin{remark}
For $\lambda = 0,$ the results obtained in the Theorem \ref{Th-Al-Amiri} are coincide with results in \cite[Theorem 2.1, p.230]{Peng}.
\end{remark}

\begin{corollary}\label{Peng-Th-Cor1}
Let $f \in \mathcal{K}_{\sigma}(\varphi).$ Then
\begin{equation}\label{th-cor1-|a2|}
|a_2| \leq \frac{B_1\sqrt{B_1}}{\sqrt{4B_1+|2B_1^2-4B_2|}}
\end{equation}
and
\begin{equation}\label{th-cor1-|a3|}
\left\vert a_3\right\vert \leq \left\{
\begin{array}{ll}
\left(1-\frac{2}{3B_1}\right)\frac{B_1^3}{4B_1+|2B_1^2-4B_2|}%
    +\frac{B_1}{6}& {;}  B_1 \geq \frac{2}{3}{;} \\
&  \\
\frac{B_1}{3(1+\lambda)} & {;} B_{1} < \frac{2}{3}{.}%
\end{array}%
\right.
\end{equation}
\end{corollary}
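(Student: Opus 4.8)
The plan is to derive this corollary as an immediate specialization of Theorem \ref{Th-Al-Amiri}. The key observation is that $\mathcal{K}_{\sigma}(\varphi)$ coincides with $\mathcal{G}^{1}_{\sigma}(\varphi)$: putting $\lambda = 1$ in the two defining subordinations of the Definition collapses the convex combination $(1-\lambda)f^{\prime}(z)+\lambda\left(1+\frac{zf^{\prime\prime}(z)}{f^{\prime}(z)}\right)$ to $1+\frac{zf^{\prime\prime}(z)}{f^{\prime}(z)}$, which is exactly the Ma-Minda bi-convexity condition recorded as special case (2) immediately after the Definition. Hence every $f\in\mathcal{K}_{\sigma}(\varphi)$ satisfies the hypotheses of Theorem \ref{Th-Al-Amiri} with $\lambda = 1$, and no assumption on $\varphi$ beyond the standing ones is needed.

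First I would substitute $\lambda = 1$ into the bound (\ref{th-|a2|}). Since $(3-\lambda)B_{1}^{2} = 2B_{1}^{2}$ in this case, the denominator $\sqrt{4B_{1}+|(3-\lambda)B_{1}^{2}-4B_{2}|}$ becomes $\sqrt{4B_{1}+|2B_{1}^{2}-4B_{2}|}$, which is precisely (\ref{th-cor1-|a2|}). Next I would carry the same substitution into (\ref{th-|a3|}): with $\lambda = 1$ one has $3(1+\lambda) = 6$, so that $\frac{4}{3(1+\lambda)B_{1}} = \frac{2}{3B_{1}}$, $\frac{B_{1}}{3(1+\lambda)} = \frac{B_{1}}{6}$, and the threshold $\frac{4}{3(1+\lambda)} = \frac{2}{3}$. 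Inserting these values into the two branches of (\ref{th-|a3|}), together with the reduction $(3-\lambda)B_{1}^{2} = 2B_{1}^{2}$ in the denominator, produces (\ref{th-cor1-|a3|}).

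Because the statement is a pure restriction of an already proved theorem to a single value of the parameter, there is no genuine analytic obstacle. The only points that demand care are the elementary arithmetic simplifications (chiefly $\frac{4}{6B_{1}} = \frac{2}{3B_{1}}$ and $\frac{4}{6} = \frac{2}{3}$) and the verification that the identification $\mathcal{K}_{\sigma}(\varphi) = \mathcal{G}^{1}_{\sigma}(\varphi)$ is indeed the $\lambda = 1$ instance of the Definition, so that Theorem \ref{Th-Al-Amiri} applies without modification. I note in passing that the second branch of (\ref{th-cor1-|a3|}) is most naturally written as $\frac{B_{1}}{6}$ rather than $\frac{B_{1}}{3(1+\lambda)}$, the latter still carrying the now-fixed parameter $\lambda$.
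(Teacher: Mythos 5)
Your proposal is correct and is exactly the argument the paper intends: the corollary is the $\lambda=1$ specialization of Theorem \ref{Th-Al-Amiri}, using the identification $\mathcal{K}_{\sigma}(\varphi)=\mathcal{G}^{1}_{\sigma}(\varphi)$, and your arithmetic substitutions all check out. Your closing remark is also apt: the appearance of $\frac{B_{1}}{3(1+\lambda)}$ in the second branch of (\ref{th-cor1-|a3|}) is a leftover of the general parameter and should indeed read $\frac{B_{1}}{6}$.
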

\section{Fekete-Szeg\"{o} inequalities}
In order to derive our result, we shall need the following lemma.

\begin{lemma}
\textrm{(see \cite{Duren} or \cite{Jahangiri})}\label{lem-pom} Let $%
p(z)=1+p_{1}z+p_{2}z^{2}+\cdots \in \mathcal{P},$ where $\mathcal{P}$ is the
family of all functions $p,$ analytic in $\mathbb{U},$ for which $\Re
\{p(z)\}>0,$ $z \in \mathbb{U}$. Then
\begin{equation*}
|p_{n}|\leqq 2;\qquad n=1,2,3,...,
\end{equation*}%
and
\begin{equation*}
\left\vert p_{2}-\frac{1}{2}p_{1}^{2}\right\vert \leq 2-\frac{1}{2}%
|p_{1}|^{2}.
\end{equation*}
\end{lemma}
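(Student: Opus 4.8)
The plan is to treat the two assertions separately, since the coefficient bound $|p_n|\le 2$ is the classical Carath\'eodory--Toeplitz estimate, while the sharpened inequality for $p_2-\tfrac12 p_1^2$ is most transparently obtained through the Schwarz function attached to $p$. Throughout I would use only elementary facts about functions of positive real part, so that the argument reproduces the standard proof recorded in Duren or Jahangiri.

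For the first assertion I would invoke the Herglotz (Riesz) representation of Carath\'eodory functions: every $p\in\mathcal{P}$ with $p(0)=1$ admits the form
\begin{equation*}
p(z)=\int_{|\zeta|=1}\frac{\zeta+z}{\zeta-z}\,d\mu(\zeta),
\end{equation*}
where $\mu$ is a probability measure on the unit circle. Expanding the kernel as a geometric series, $\frac{\zeta+z}{\zeta-z}=1+2\sum_{n\ge 1}\bar\zeta^{\,n}z^{n}$ (using $\zeta^{-1}=\bar\zeta$ on $|\zeta|=1$), and comparing with $p(z)=1+\sum p_nz^n$ gives $p_n=2\int_{|\zeta|=1}\bar\zeta^{\,n}\,d\mu(\zeta)$. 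Since $\mu$ has total mass $1$, the triangle inequality yields $|p_n|\le 2\int d\mu=2$ at once.

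For the second assertion I would pass to the Schwarz function $w(z)=\dfrac{p(z)-1}{p(z)+1}=w_1z+w_2z^2+\cdots$, which satisfies $w(0)=0$ and $|w(z)|<1$ because $\Re p>0$ forces $p$ into the right half-plane. Inverting, $p=\frac{1+w}{1-w}=1+2w+2w^2+\cdots$, and matching the first two coefficients gives $p_1=2w_1$ and $p_2=2w_2+2w_1^2$. The decisive point of the computation is the cancellation
\begin{equation*}
p_2-\tfrac12 p_1^2=2w_2+2w_1^2-2w_1^2=2w_2,
\end{equation*}
so the whole question reduces to a bound on the second Taylor coefficient of a Schwarz function.

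The crux, and the step I expect to be the main obstacle to make fully rigorous, is the estimate $|w_2|\le 1-|w_1|^2$. I would obtain it by considering $\omega(z)=w(z)/z$, which by the Schwarz lemma maps $\mathbb{U}$ into $\overline{\mathbb{U}}$ (and, when $w$ is not a rotation, into the open disk by the open mapping theorem), and which satisfies $\omega(0)=w_1$, $\omega'(0)=w_2$. The Schwarz--Pick inequality at the origin then gives $|\omega'(0)|\le 1-|\omega(0)|^2$, that is $|w_2|\le 1-|w_1|^2$, with the boundary case $|w_1|=1$ (where $w$ is a rotation and $w_2=0$) checked directly. Combining this with the identity above and $|w_1|=|p_1|/2$ yields
\begin{equation*}
\left|p_2-\tfrac12 p_1^2\right|=2|w_2|\le 2\bigl(1-|w_1|^2\bigr)=2-\tfrac12|p_1|^2,
\end{equation*}
which is precisely the claimed inequality; note that consistency of the right-hand side with $|p_1|\le 2$ from the first part confirms the estimate is nonnegative.
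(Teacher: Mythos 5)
Your proof is correct. Note first that the paper itself offers no proof of this lemma: it is quoted as a known classical result with a pointer to Duren and Jahangiri, so there is no in-paper argument to compare yours against. What you have written is the standard textbook proof, and both halves check out: the Herglotz representation $p(z)=\int_{|\zeta|=1}\frac{\zeta+z}{\zeta-z}\,d\mu(\zeta)$ with $\mu$ a probability measure gives $p_n=2\int\bar\zeta^{\,n}d\mu$ and hence $|p_n|\le 2$; and the reduction $p_2-\tfrac12p_1^2=2w_2$, $p_1=2w_1$ via the Schwarz function $w=(p-1)/(p+1)$ is an exact coefficient identity, after which Schwarz--Pick applied to $w(z)/z$ yields $|w_2|\le 1-|w_1|^2$ (with the rotation case $|w_1|=1$, $w_2=0$ handled separately, as you do), giving precisely $\left|p_2-\tfrac12p_1^2\right|\le 2-\tfrac12|p_1|^2$. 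It is worth observing that your second argument is exactly the mechanism the paper relies on elsewhere: the inequality $|b_2|\le 1-|b_1|^2$ for Schwarz functions, invoked as (2.4) in the proof of Theorem 2.1, is the same Schwarz--Pick estimate, and the lemma as stated is just its translation from the Schwarz function $w$ to the Carath\'eodory function $p$. So your proof not only fills the gap left by the citation but also makes transparent why the two normalizations (Schwarz-function coefficients versus $\mathcal{P}$-class coefficients) used in different sections of the paper are equivalent.
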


\begin{theorem}
\label{th1-YBIFS} Let $f$ of the form (\ref{Int-e1}) be in $\mathcal{G}%
^{\lambda}_{\sigma}(\varphi).$ Then

\begin{equation}  \label{th1-YBIFS-a2}
\left\vert a_{2}\right\vert \leq \left\{
\begin{array}{ll}
\sqrt{\frac{B_{1}}{3-\lambda}},\qquad & \text{if \ \ }%
|B_{2}|\leq B_{1}{;} \\
&  \\
\sqrt{\frac{|B_{2}|}{3-\lambda}}, & \text{if \ \ }|B_{2}|\geq
B_{1}%
\end{array}%
\right.
\end{equation}

\noindent and

\begin{equation}  \label{th1-YBIFS-a3-a2}
\left\vert a_{3}-\frac{4\lambda}{3+3\lambda}a_{2}^{2}\right\vert
\leq \left\{
\begin{array}{ll}
\frac{B_{1}}{3+3\lambda},\qquad & \text{if \ \ }|B_{2}|\leq B_{1}{;} \\
&  \\
\frac{|B_{2}|}{3+3\lambda}, & \text{if \ \ }|B_{2}|\geq B_{1}{.}%
\end{array}%
\right.
\end{equation}
\end{theorem}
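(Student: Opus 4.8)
The plan is to reuse the entire computational setup of Theorem~\ref{Th-Al-Amiri} and then feed it into Lemma~\ref{lem-pom}. Since $f\in\mathcal{G}^{\lambda}_{\sigma}(\varphi)$, the defining subordinations produce Schwarz functions $u,v$ as in \eqref{u--v}, so equations \eqref{fa2}--\eqref{ga2a3} and the relation \eqref{b1=c1} are all available unchanged. To bring Lemma~\ref{lem-pom} into play I would pass from the Schwarz functions to Carath\'eodory functions by setting $p(z)=\frac{1+u(z)}{1-u(z)}=1+p_1z+p_2z^2+\cdots$ and $q(w)=\frac{1+v(w)}{1-v(w)}=1+q_1w+q_2w^2+\cdots$, both in $\mathcal{P}$. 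A direct computation gives $b_1=\tfrac12 p_1$, $b_2=\tfrac12\bigl(p_2-\tfrac12 p_1^2\bigr)$, and likewise $c_1=\tfrac12 q_1$, $c_2=\tfrac12\bigl(q_2-\tfrac12 q_1^2\bigr)$, while \eqref{b1=c1} forces $q_1=-p_1$, hence $q_1^2=p_1^2$ and $|q_1|=|p_1|$.

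For the Fekete-Szeg\"o functional the key observation is that no elimination is needed: equation \eqref{fa2a3} already reads $3(1+\lambda)a_3-4\lambda a_2^2=B_1b_2+B_2b_1^2$, so dividing by $3(1+\lambda)$ yields
\[
a_3-\frac{4\lambda}{3+3\lambda}\,a_2^2=\frac{B_1b_2+B_2b_1^2}{3(1+\lambda)}.
\]
Applying $\bigl|p_2-\tfrac12p_1^2\bigr|\le 2-\tfrac12|p_1|^2$ and $|p_1|\le 2$ from Lemma~\ref{lem-pom}, and writing $x:=\tfrac14|p_1|^2=|b_1|^2\in[0,1]$, I would bound $|B_1b_2+B_2b_1^2|\le B_1(1-x)+|B_2|x=B_1+x(|B_2|-B_1)$. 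Maximising this linear function of $x$ over $[0,1]$ gives $B_1$ at $x=0$ when $|B_2|\le B_1$ and $|B_2|$ at $x=1$ when $|B_2|\ge B_1$, which is precisely \eqref{th1-YBIFS-a3-a2}.

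For the bound on $|a_2|$ I would add \eqref{fa2a3} and \eqref{ga2a3} to obtain
\[
2(3-\lambda)\,a_2^2=B_1(b_2+c_2)+2B_2b_1^2,
\]
this being the form of \eqref{c2+b2} \emph{before} $b_1$ is re-expressed through $a_2$; keeping it in this form is what yields the stated constant. Substituting the Carath\'eodory expressions and using $q_1^2=p_1^2$ gives $2(3-\lambda)a_2^2=\tfrac{B_1}{2}\bigl[(p_2-\tfrac12p_1^2)+(q_2-\tfrac12q_1^2)\bigr]+\tfrac{B_2}{2}p_1^2$. Since $2(3-\lambda)>0$ we have $2(3-\lambda)|a_2|^2=|2(3-\lambda)a_2^2|$, and Lemma~\ref{lem-pom} bounds each bracketed term by $2-\tfrac12|p_1|^2$ while $|p_1|\le2$ controls the last term, producing $2(3-\lambda)|a_2|^2\le 2B_1+2x(|B_2|-B_1)$ with the same $x\in[0,1]$. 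Optimising over $[0,1]$ and dividing by $2(3-\lambda)$ returns $\tfrac{B_1}{3-\lambda}$ or $\tfrac{|B_2|}{3-\lambda}$ according as $|B_2|\le B_1$ or $|B_2|\ge B_1$, matching \eqref{th1-YBIFS-a2}.

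The computations are routine; the only real difficulty is organisational. The hard part will be the two bookkeeping choices that make the constants come out clean: first, recognising that the Fekete-Szeg\"o coefficient $\tfrac{4\lambda}{3+3\lambda}$ is exactly the one for which \eqref{fa2a3} delivers the functional with no cross-term to eliminate; and second, estimating $a_2^2$ directly from the summed identity rather than substituting $b_1=2a_2/B_1$ as was done in Theorem~\ref{Th-Al-Amiri}, so that both estimates collapse to a single one-variable linear optimisation whose two endpoints furnish precisely the cases $|B_2|\le B_1$ and $|B_2|\ge B_1$.
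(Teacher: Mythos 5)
Your proposal is correct and takes essentially the same route as the paper's proof: passing to Carath\'eodory functions, estimating the Fekete--Szeg\"o functional directly from the single identity $3(1+\lambda)a_3-4\lambda a_2^2=\tfrac12 B_1\left(p_2-\tfrac12 p_1^2\right)+\tfrac14 B_2 p_1^2$, obtaining the $a_2$ bound by adding the two second-order coefficient identities, and invoking Lemma \ref{lem-pom}; your parameter $x=\tfrac14|p_1|^2=|b_1|^2$ with a linear optimisation over $[0,1]$ is just a repackaging of the paper's case split $|B_2|\leq B_1$ versus $|B_2|\geq B_1$. Incidentally, your bookkeeping produces the correct coefficient $\tfrac{|B_2|-B_1}{4}$ multiplying $|p_1|^2+|q_1|^2$ (equivalently $\tfrac{|B_2|-B_1}{2}$ multiplying $|p_1|^2$ once $|q_1|=|p_1|$ is used), whereas the paper's ``upon simplification'' display shows $\tfrac{|B_2|-B_1}{2}$ on $|p_1|^2+|q_1|^2$ --- a harmless factor-of-two slip in an intermediate line, since the theorem's stated bounds follow from the correct constant, exactly as your argument confirms.
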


\begin{proof}
Since $f\in \mathcal{G}_{\sigma}^{\lambda}(\varphi),$ there exist two
analytic functions $r,s:\mathbb{U}\rightarrow \mathbb{U},$ with $r(0)=0=s(0),
$ such that
\begin{equation}
(1-\lambda)f^{\prime}(z)+\lambda\left(1+\frac{zf^{\prime \prime }(z)}{f^{\prime }(z)}\right)=\varphi (r(z))
\label{th-YBIFS-p-e3}
\end{equation}%
\noindent and
\begin{equation}
(1-\lambda)g^{\prime}(w)+\lambda\left(1+\frac{wg^{\prime \prime }(w)}{g^{\prime }(w)}\right)=\varphi (s(w)).
\label{th-YBIFS-p-e4}
\end{equation}%
\noindent Define the functions $p$ and $q$ by
\begin{equation*}
p(z)=\frac{1+r(z)}{1-r(z)}=1+p_{1}z+p_{2}z^{2}+p_{3}z^{3}+\dots
\end{equation*}%
\noindent and
\begin{equation*}
q(w)=\frac{1+s(w)}{1-s(w)}=1+q_{1}w+q_{2}w^{2}+q_{3}w^{3}+\dots
\end{equation*}%
\noindent or equivalently,
\begin{equation}
r(z)=\frac{p(z)-1}{p(z)+1}=\frac{1}{2}\left( p_{1}z+\left( p_{2}-\frac{%
p_{1}^{2}}{2}\right) z^{2}+\left( p_{3}+\frac{p_{1}}{2}\left( \frac{p_{1}^{2}%
}{2}-p_{2}\right) -\frac{p_{1}p_{2}}{2}\right) z^{3}+\dots \right)
\label{th-YBIFS-p-e7}
\end{equation}%
\noindent and
\begin{equation}
s(w)=\frac{q(w)-1}{q(w)+1}=\frac{1}{2}\left( q_{1}w+\left( q_{2}-\frac{%
q_{1}^{2}}{2}\right) w^{2}+\left( q_{3}+\frac{q_{1}}{2}\left( \frac{q_{1}^{2}%
}{2}-q_{2}\right) -\frac{q_{1}q_{2}}{2}\right) w^{3}+\dots \right) .
\label{th-YBIFS-p-e8}
\end{equation}%
\noindent Using (\ref{th-YBIFS-p-e7}) and (\ref{th-YBIFS-p-e8}) in (\ref%
{th-YBIFS-p-e3}) and (\ref{th-YBIFS-p-e4}), we have
\begin{equation}
(1-\lambda)f^{\prime}(z)+\lambda\left(1+\frac{zf^{\prime \prime }(z)}{f^{\prime }(z)}\right)=\varphi \left( \frac{%
p(z)-1}{p(z)+1}\right)   \label{th-YBIFS-p-e3-9}
\end{equation}%
\noindent and
\begin{equation}
(1-\lambda)g^{\prime}(w)+\lambda\left(1+\frac{wg^{\prime \prime }(w)}{g^{\prime }(w)}\right)=\varphi \left( \frac{%
q(w)-1}{q(w)+1}\right) .  \label{th-YBIFS-p-e3-10}
\end{equation}

\noindent Again using (\ref{th-YBIFS-p-e7}) and (\ref{th-YBIFS-p-e8}) along
with (\ref{varphi-expression}), it is evident that

\begin{equation}
\varphi \left( \frac{p(z)-1}{p(z)+1}\right) =1+\frac{1}{2}B_{1}p_{1}z+\left(
\frac{1}{2}B_{1}\left( p_{2}-\frac{1}{2}p_{1}^{2}\right) +\frac{1}{4}%
B_{2}p_{1}^{2}\right) z^{2}+\dots   \label{th-YBIFS-p-e11}
\end{equation}%
\noindent and
\begin{equation}
\varphi \left( \frac{q(w)-1}{q(w)+1}\right) =1+\frac{1}{2}B_{1}q_{1}w+\left(
\frac{1}{2}B_{1}\left( q_{2}-\frac{1}{2}q_{1}^{2}\right) +\frac{1}{4}%
B_{2}q_{1}^{2}\right) w^{2}+\dots .  \label{th-YBIFS-p-e12}
\end{equation}%
\noindent It follows from (\ref{th-YBIFS-p-e3-9}), (\ref{th-YBIFS-p-e3-10}),
(\ref{th-YBIFS-p-e11}) and (\ref{th-YBIFS-p-e12}) that
\begin{eqnarray}
2a_{2}&=&\frac{1}{2}B_{1}p_{1} \notag \\
3(1+\lambda)a_{3}-4\lambda a_{2}^{2}&=&\frac{1}{2}B_{1}\left(
p_{2}-\frac{1}{2}p_{1}^{2}\right) +\frac{1}{4}B_{2}p_{1}^{2}
\label{th-YBIFS-p-e14}\\
-2a_{2}&=&\frac{1}{2}B_{1}q_{1}
\notag \\
2(\lambda+3)a_{2}^{2}-3(1+\lambda)a_{3}&=&\frac{1}{2}B_{1}\left(
q_{2}-\frac{1}{2}q_{1}^{2}\right) +\frac{1}{4}B_{2}q_{1}^{2}.
\label{th-YBIFS-p-e16}
\end{eqnarray}%
\noindent 

\noindent Dividing (\ref{th-YBIFS-p-e14}) by $3+3\lambda$ and taking the
absolute values we obtain
\begin{equation*}
\left\vert a_{3}-\frac{4\lambda}{3+3\lambda}a_{2}^{2}\right\vert
\leq \frac{B_{1}}{6+6\lambda}\left\vert p_{2}-\frac{1}{2}%
p_{1}^{2}\right\vert +\frac{|B_{2}|}{12+12\lambda}|p_{1}|^{2}.
\end{equation*}

\noindent Now applying Lemma \ref{lem-pom}, we have

\begin{equation*}
\left\vert a_{3}-\frac{4\lambda}{3+3\lambda}a_{2}^{2}\right\vert
\leq \frac{B_{1}}{3+3\lambda}+\frac{|B_{2}|-B_{1}}{12+12\lambda}|p_{1}|^{2}.
\end{equation*}

\noindent Therefore

\begin{equation*}
\left\vert a_{3}-\frac{4\lambda}{3+3\lambda}a_{2}^{2}\right\vert
\leq \left\{
\begin{array}{ll}
\frac{B_{1}}{3+3\lambda},\qquad  & \text{if \ \ }|B_{2}|\leq B_{1}{;} \\
&  \\
\frac{|B_{2}|}{3+3\lambda}, & \text{if \ \ }|B_{2}|\geq B_{1}{.}%
\end{array}%
\right.
\end{equation*}%
\noindent Adding (\ref{th-YBIFS-p-e14}) and (\ref{th-YBIFS-p-e16}), we have
\begin{equation}
(6-2\lambda)a_{2}^{2}=\frac{B_{1}}{2}(p_{2}+q_{2})-\frac{%
(B_{1}-B_{2})}{4}(p_{1}^{2}+q_{1}^{2}).  \label{JJM-a-2-square}
\end{equation}

\noindent Dividing (\ref{JJM-a-2-square}) by $6-2\lambda$ and taking
the absolute values we obtain
\begin{equation*}
|a_{2}|^{2}\leq \frac{1}{6-2\lambda}\left[ \frac{B_{1}}{2}%
\left\vert p_{2}-\frac{1}{2}p_{1}^{2}\right\vert +\frac{|B_{2}|}{4}%
|p_{1}|^{2}+\frac{B_{1}}{2}\left\vert q_{2}-\frac{1}{2}q_{1}^{2}\right\vert +%
\frac{|B_{2}|}{4}|q_{1}|^{2}\right] .
\end{equation*}

\noindent Once again, apply Lemma \ref{lem-pom} to obtain

\begin{equation*}
|a_{2}|^{2}\leq \frac{1}{6-2\lambda}\left[ \frac{B_{1}}{2}%
\left( 2-\frac{1}{2}|p_{1}|^{2}\right) +\frac{|B_{2}|}{4}|p_{1}|^{2}+\frac{%
B_{1}}{2}\left( 2-\frac{1}{2}|q_{1}|^{2}\right) +\frac{|B_{2}|}{4}|q_{1}|^{2}%
\right] .
\end{equation*}

\noindent Upon simplification we obtain

\begin{equation*}
|a_{2}|^{2}\leq \frac{1}{6-2\lambda}\left[ 2B_{1}+\frac{%
|B_{2}|-B_{1}}{2}\left( |p_{1}|^{2}+|q_{1}|^{2}\right) \right] .
\end{equation*}
\noindent Therefore

\begin{equation*}
\left\vert a_{2}\right\vert \leq \left\{
\begin{array}{ll}
\sqrt{\frac{B_{1}}{3-\lambda}},\qquad & \text{if \ \ }%
|B_{2}|\leq B_{1}{;} \\
&  \\
\sqrt{\frac{|B_{2}|}{3-\lambda}}, & \text{if \ \ }|B_{2}|\geq
B_{1}%
\end{array}%
\right.
\end{equation*}
which completes the proof.
\end{proof}
\begin{remark}
Taking
\begin{equation}
\varphi (z)=\left( \frac{1+z}{1-z}\right) ^{\beta }=1+2\beta z+2\beta
^{2}z^{2}+\dots, \qquad 0<\beta \leq 1  \label{varphi-beta}
\end{equation}%
the inequalities (\ref{th1-YBIFS-a2}) and (\ref{th1-YBIFS-a3-a2}) become
\begin{equation}
|a_{2}|\leq \sqrt{\frac{2\beta}{3-\lambda}}\qquad \mathrm{and%
}\qquad \left\vert a_{3}-\frac{4\lambda}{3+3\lambda}%
a_{2}^{2}\right\vert \leq \frac{2\beta}{3+3\lambda}.  \label{th1-remark1}
\end{equation}%
\noindent For
\begin{equation}  \label{varphi-alpha}
\varphi(z) = \frac{1+(1-2\beta)z}{1-z} = 1 + 2(1-\beta) z + 2(1-\beta)
z^2 + \dots,  \qquad 0 \leq \beta < 1
\end{equation}
the inequalities (\ref{th1-YBIFS-a2}) and (\ref{th1-YBIFS-a3-a2}) become
\begin{equation}  \label{th1-remark2}
|a_2| \leq \sqrt{\frac{2(1-\beta)}{3-\lambda}} \qquad \mathrm{%
and} \qquad \left\vert a_{3}-\frac{4\lambda}{3-\lambda}%
a_{2}^{2}\right\vert \leq \frac{2(1-\beta)}{3+3\lambda} .
\end{equation}
\end{remark}

\section{Bounds for the second Hankel determinant of $\mathcal{G}^{\lambda}_{\sigma}(\beta)$}

Next we state the following lemmas  to establish the desired
bounds in our study.

\begin{lemma}
\cite{Pom}\label{L-Repart-fun-p} If the function $p\in \mathcal{P}$ is given
by the series
\begin{equation}  \label{Repart-fun-p}
p(z)= 1+ p_{1} z + p_{2} z^{2} + p_{3} z^{3} + \cdots,
\end{equation}
then the following sharp estimate holds:
\begin{equation}  \label{cnbound}
|p_{n}|\leq 2, \qquad n= 1,2, \cdots.
\end{equation}
\end{lemma}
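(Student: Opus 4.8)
The plan is to prove this classical Carath\'{e}odory coefficient bound by extracting each coefficient $p_{n}$ as a Fourier coefficient of the real part of $p$ on a circle $|z|=r<1$, and then exploiting positivity of that real part together with the normalization $p(0)=1$. I would fix $r\in(0,1)$ and write $z=re^{i\theta}$, so that the series in (\ref{Repart-fun-p}) gives $p(re^{i\theta})=1+\sum_{k\geq 1}p_{k}r^{k}e^{ik\theta}$, which converges uniformly in $\theta$. Taking real parts and integrating against $e^{-in\theta}$, the orthogonality of the exponentials leaves only one surviving term and yields the representation
\[
p_{n}r^{n}=\frac{1}{\pi}\int_{0}^{2\pi}\Re\{p(re^{i\theta})\}\,e^{-in\theta}\,d\theta,\qquad n\geq 1.
\]

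Next, since $\Re\{p(re^{i\theta})\}>0$ by hypothesis, I would estimate by absolute values, using $|e^{-in\theta}|=1$, to obtain $|p_{n}|\,r^{n}\leq \frac{1}{\pi}\int_{0}^{2\pi}\Re\{p(re^{i\theta})\}\,d\theta$. The remaining integral equals twice the mean value of $\Re p$ on the circle, and by the mean value property (the $n=0$ Fourier coefficient) one has $\frac{1}{2\pi}\int_{0}^{2\pi}\Re\{p(re^{i\theta})\}\,d\theta=\Re p(0)=1$. Hence $|p_{n}|\,r^{n}\leq 2$ for every $r<1$, and letting $r\to 1^{-}$ gives $|p_{n}|\leq 2$. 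Sharpness then follows from the M\"{o}bius function $p(z)=(1+z)/(1-z)=1+2\sum_{n\geq 1}z^{n}$, which lies in $\mathcal{P}$ and realizes $p_{n}=2$ for all $n$.

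The only points requiring care are the correct normalization in the Fourier extraction and the passage to the limit $r\to 1^{-}$; alternatively one could bypass the limit entirely by invoking the Herglotz representation $p(z)=\int_{0}^{2\pi}\frac{1+e^{-it}z}{1-e^{-it}z}\,d\mu(t)$ for a probability measure $\mu$, from which $p_{n}=2\int_{0}^{2\pi}e^{-int}\,d\mu(t)$ and therefore $|p_{n}|\leq 2\int_{0}^{2\pi}d\mu=2$ follows immediately. Since this is a standard result (attributed to Carath\'{e}odory, as in the cited reference \cite{Pom}), there is no substantive obstacle here; the work consists entirely in assembling these classical ingredients, and in practice the lemma may simply be quoted.
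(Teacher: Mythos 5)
Your argument is correct in every step: the extraction
$p_{n}r^{n}=\frac{1}{\pi}\int_{0}^{2\pi}\Re\{p(re^{i\theta})\}e^{-in\theta}\,d\theta$
is valid (only the $k=n$ term of $\Re p = 1+\tfrac12\sum_{k}r^{k}(p_{k}e^{ik\theta}+\overline{p_{k}}e^{-ik\theta})$ survives the orthogonality), positivity of $\Re p$ plus the mean-value identity $\frac{1}{2\pi}\int_{0}^{2\pi}\Re\{p(re^{i\theta})\}\,d\theta=\Re p(0)=1$ gives $|p_{n}|r^{n}\leq 2$, and letting $r\to 1^{-}$ finishes the bound; the function $(1+z)/(1-z)$ settles sharpness, and your Herglotz alternative is equally sound.

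The comparison with the paper, however, is one-sided: the paper contains no proof of this lemma at all. It is stated as a known classical result (Carath\'{e}odory's coefficient bound) and simply cited from Pommerenke's book \cite{Pom}, after which it is used as a black box in the proof of Theorem \ref{th-SHD-class}, alongside Lemma \ref{L-C2-C3}. So your work supplies an argument where the authors supply only a reference. Of your two routes, the Fourier-coefficient/positivity computation is the standard textbook proof, while the Herglotz-representation route is essentially how the result is organized in the cited source itself, where the coefficient bound is an immediate consequence of the integral representation of functions with positive real part; either one is a complete substitute for the citation. Your closing remark that ``the lemma may simply be quoted'' is precisely what the paper does, and for a result of this vintage that is the appropriate practice; nothing in your proposal is missing or wrong.
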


\begin{lemma}
\label{L-C2-C3} \cite{Grender} If the function $p\in \mathcal{P}$ is given
by the series (\ref{Repart-fun-p}), then
\begin{eqnarray*}
2c_2 &=& c_1^2 + x (4-c_1^2)\,\,  \label{L2-c2} \\
4c_3 &=& c_1^3+2c_1(4-c_1^2)x-c_1(4-c_1^2)x^2+2(4-c_1^2)(1-|x|^2)z\,\,
\label{L2-c3}
\end{eqnarray*}
for some $x,$ $z$ with $|x| \leq 1$ and $|z| \leq 1.$
\end{lemma}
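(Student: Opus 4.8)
The plan is to pass from $p$ to its associated Schwarz function and then read the two identities off the Schur parametrization of Schwarz coefficients; throughout I write $c_n$ for the coefficients $p_n$ in (\ref{Repart-fun-p}), as in the statement. As is customary, I first normalize $c_1=c\in[0,2]$: replacing $p(z)$ by $p(e^{i\theta}z)$ multiplies $c_n$ by $e^{in\theta}$, so a suitable rotation makes $c_1$ real and nonnegative, and then $4-c_1^{2}=4-|c_1|^{2}\ge 0$. This is exactly the frame in which the displayed formulas (and their later use in bounding $H_2(2)$) are intended.

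Since $\Re p>0$ and $p(0)=1$, the function $w(z)=\dfrac{p(z)-1}{p(z)+1}$ is a Schwarz function, i.e. $w(0)=0$ and $|w(z)|<1$. Writing $w(z)=\sum_{n\ge 1}w_nz^{n}$ and comparing coefficients in the identity $(1-w)\,p=1+w$, I would record
\[
c_1=2w_1,\qquad c_2=\tfrac12c_1^{2}+2w_2,\qquad c_3=c_1c_2-\tfrac14c_1^{3}+2w_3 .
\]
This reduces the whole problem to the first three coefficients of a single Schwarz function.

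The technical heart is the classical Schur descent. Since $w(z)/z$ maps $\mathbb{U}$ into $\overline{\mathbb{U}}$ and takes the value $w_1$ at the origin (Schwarz lemma), iterating the Schur transform $\Omega\mapsto \dfrac{1}{\zeta}\cdot\dfrac{\Omega(\zeta)-\Omega(0)}{1-\overline{\Omega(0)}\,\Omega(\zeta)}$, started from $\Omega=w(\zeta)/\zeta$, and reading off leading coefficients yields, for some $x,z$ with $|x|\le1$ and $|z|\le1$,
\[
w_2=(1-|w_1|^{2})\,x,\qquad w_3=(1-|w_1|^{2})\big[(1-|x|^{2})\,z-\overline{w_1}\,x^{2}\big],
\]
the constraints $|x|\le1,\ |z|\le1$ being precisely the assertion that each successive transform is again a self-map of the disk. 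I expect this descent --- pinning down the exact algebraic shape of $w_3$, in particular the cross term $-\overline{w_1}\,x^{2}$ and the factor $(1-|x|^{2})$ that carries the remaining degree of freedom --- to be the only step demanding genuine work.

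Finally I would substitute. With $w_1=\tfrac12c_1$ and (after the normalization) $1-|w_1|^{2}=\tfrac14(4-c_1^{2})$, the expression for $w_2$ gives $2c_2=c_1^{2}+(4-c_1^{2})x$ at once. Feeding $c_2$ and $w_3$ into $c_3=c_1c_2-\tfrac14c_1^{3}+2w_3$ and clearing denominators produces
\[
4c_3=c_1^{3}+2c_1(4-c_1^{2})x-c_1(4-c_1^{2})x^{2}+2(4-c_1^{2})(1-|x|^{2})z,
\]
which is the second identity. As a consistency check, the bound $|x|\le1$ in the first identity is nothing but the second-coefficient inequality $\bigl|c_2-\tfrac12c_1^{2}\bigr|\le 2-\tfrac12|c_1|^{2}$ already invoked in Lemma \ref{lem-pom}, while $|c_1|\le2$ recovers Lemma \ref{L-Repart-fun-p}.
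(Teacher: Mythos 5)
The paper never proves this lemma: it is quoted as a known result from Grenander and Szeg\"o \cite{Grender} (the coefficient parametrization is usually attributed to Libera and Z\l otkiewicz) and then used as a black box in the proof of Theorem \ref{th-SHD-class}. So there is no proof of record to compare against; what matters is whether your self-contained derivation is sound, and it is. The reduction $c_1=2w_1$, $c_2=\tfrac12 c_1^2+2w_2$, $c_3=c_1c_2-\tfrac14 c_1^3+2w_3$ is exactly what comparing coefficients in $p-1=w\,(p+1)$ gives; the Schur-descent formulas $w_2=(1-|w_1|^2)x$ and $w_3=(1-|w_1|^2)\bigl[(1-|x|^2)z-\overline{w_1}\,x^2\bigr]$ with $|x|\le 1$, $|z|\le 1$ are correct (the degenerate cases $|w_1|=1$ and $|x|=1$ are harmless, since then the relevant prefactor vanishes or the transform is constant by the maximum principle, and any choice of the free parameter works); and the back-substitution reproduces both displayed identities. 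Two caveats are worth recording. First, the Schur step, which you yourself flag as the only real work, is asserted rather than executed; writing out one iteration (computing $\Omega_1(0)$ and $\Omega_1'(0)$ explicitly) is needed to close it, and the algebra does confirm your stated formulas, including the cross term $-\overline{w_1}x^2$. Second, your opening rotation proves the lemma for $c_1=c\in[0,2]$; undoing the rotation yields the general complex-$c_1$ statement with $4-|c_1|^2$ in place of $4-c_1^2$ and $\overline{c_1}$ in the $x^2$-term, which is not quite the literal display above. This is harmless for the paper's purposes, since the lemma is invoked there only after normalizing $c_1=c\in[0,2]$, but it deserves the explicit remark that the printed formulas are really a statement about that normalized case.
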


The following theorem provides a bound for the second Hankel determinant
of the functions in the class $\mathcal{G}^{\lambda}_{\sigma}(\beta).$


\begin{theorem}
\label{th-SHD-class} Let $f$ of the form (\ref{Int-e1}) be in $\mathcal{G}^{\lambda}_{\sigma}(\beta).$ Then

\[
|a_2a_4-a_3^2| \leq \left \{
\begin{array}{ll}
\frac{(1-\beta)^2}{2(1+2\lambda)} \left[(2-\lambda)(1-\beta)^2+1
\right]~; &  \\
\qquad \qquad\qquad\beta \in \left[0, 1- \frac{(1+2\lambda)+\sqrt{
(1+2\lambda)^2+18(1+\lambda)^2(2-\lambda)}}{6(1+\lambda)(2-\lambda)}
\right] &   \\
\tfrac{(1-\beta)^2}{72(1+2\lambda)}\left(\tfrac{\begin{array}{ll}
        36[8(1+2\lambda)(2-\lambda)-(1+2\lambda)^2](1-\beta)^2
        \\[4mm] \,\,- 324(1+\lambda)(1+2\lambda)(1-\beta)
             +288(1+2\lambda)-729(1+\lambda)^2\end{array}}
        {\begin{array}{ll}
        9(1+\lambda)^2(2-\lambda)(1-\beta)^2
            - 6(1+\lambda)(1+2\lambda)(1-\beta)
        \\[4mm] \,\, +8(1+2\lambda)-18(1+\lambda)^2
        \end{array}}\right)~;
        & \\
        \qquad \qquad\qquad \beta \in \left(1- \frac{(1+2\lambda)+\sqrt{
        (1+2\lambda)^2+18(1+\lambda)^2(2-\lambda)}}
        {6(1+\lambda)(2-\lambda)},1\right).
\end{array}
\right.
\]
\end{theorem}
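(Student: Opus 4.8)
The plan is to follow the proof of Theorem \ref{th1-YBIFS} but to push the coefficient comparison one degree further and then run an extremal analysis. First I would record the third-order coefficient identities. Expanding the left-hand sides of (\ref{th-YBIFS-p-e3-9}) and (\ref{th-YBIFS-p-e3-10}) to order three, the coefficient of $z^3$ on the direct side equals $4(1+2\lambda)a_4-18\lambda a_2a_3+8\lambda a_2^3$, while the inversion formula (\ref{Int-f-inver}) (so that $g=f^{-1}$ has $w^4$-coefficient $-(5a_2^3-5a_2a_3+a_4)$) gives the coefficient of $w^3$ on the inverse side as $-4(1+2\lambda)a_4+(20+22\lambda)a_2a_3-(20+12\lambda)a_2^3$. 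The decisive simplification comes from the choice $\varphi(z)=\frac{1+(1-2\beta)z}{1-z}$, which forces $B_1=B_2=B_3=2(1-\beta)=:B$; inserting this into the right-hand sides collapses all $p_1^3$ and $p_1p_2$ contributions, so the two identities read simply $4(1+2\lambda)a_4-18\lambda a_2a_3+8\lambda a_2^3=\tfrac12 Bp_3$ and its inverse analogue $=\tfrac12 Bq_3$.

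Together with the order-one and order-two identities (\ref{th-YBIFS-p-e14})--(\ref{th-YBIFS-p-e16}), which already give $a_2=\tfrac14 Bp_1$, $q_1=-p_1$ and $a_3=a_2^2+\tfrac{B(p_2-q_2)}{12(1+\lambda)}$, solving for $a_4$ yields $a_4=\tfrac{B(p_3-q_3)}{16(1+2\lambda)}+\tfrac52 a_2a_3-\tfrac{5(1+\lambda)}{2(1+2\lambda)}a_2^3$. Substituting these into $a_2a_4-a_3^2$ and simplifying, the symmetric combination collapses to
\[
a_2a_4-a_3^2=\frac{B\,a_2(p_3-q_3)}{16(1+2\lambda)}+\frac{\lambda-2}{2(1+2\lambda)}\,a_2^4+\frac{B\,a_2^2(p_2-q_2)}{24(1+\lambda)}-\frac{B^2(p_2-q_2)^2}{144(1+\lambda)^2},
\]
into which I would insert $a_2=\tfrac14 Bp_1$. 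Note that here $\tfrac{\lambda-2}{2(1+2\lambda)}<0$, so the subsequent bound will be a (generally non-sharp) triangle-inequality estimate rather than an exact evaluation.

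Next I invoke Lemma \ref{L-C2-C3} separately for $p$ and for $q$, expressing $p_2,p_3$ through $p_1$ and an auxiliary pair $(x,\zeta)$ with $|x|,|\zeta|\le 1$, and $q_2,q_3$ through $q_1=-p_1$ and $(y,\eta)$. Writing $c:=|p_1|\in[0,2]$ (legitimate after a rotation, using Lemma \ref{L-Repart-fun-p}), I obtain $|p_2-q_2|\le\tfrac12(|x|+|y|)(4-c^2)$ and a corresponding degree-three polynomial bound for $|p_3-q_3|$. Applying the triangle inequality to the displayed identity and bounding $|\zeta|,|\eta|\le 1$ reduces the problem to maximizing a single nonnegative polynomial $F(c,\mu,\nu)$, with $\mu=|x|$, $\nu=|y|$, over the box $[0,2]\times[0,1]^2$.

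The heart of the argument, and the main obstacle, is this final maximization: one must track how the maximizer moves across the faces and interior of the box as $\beta$ varies. The two regimes in the statement correspond to the competition between the corner $c=2$ (where $4-c^2=0$ kills the $p_2-q_2$ terms and leaves $F=\tfrac{B^2}{8(1+2\lambda)}+\tfrac{(2-\lambda)B^4}{32(1+2\lambda)}$, which is the first branch) and an interior critical point in $c$ (which produces the rational second branch). Setting the two candidate values equal gives, after clearing the square root, the quadratic $6(1+\lambda)(2-\lambda)(1-\beta)^2-2(1+2\lambda)(1-\beta)-3(1+\lambda)=0$ in $(1-\beta)$, whose relevant root is exactly $1-\beta=\tfrac{(1+2\lambda)+\sqrt{(1+2\lambda)^2+18(1+\lambda)^2(2-\lambda)}}{6(1+\lambda)(2-\lambda)}$, i.e. the stated threshold. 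Verifying that $c=2$ dominates below this threshold and the interior point above it, together with the routine check that the remaining faces ($\mu,\nu\in\{0,1\}$) never beat these two candidates, completes the proof.
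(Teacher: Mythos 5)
Your proposal is correct and follows essentially the same route as the paper: the same coefficient identities (your Schwarz-function computation with $B_1=B_2=B_3=2(1-\beta)$ is exactly the paper's representation $\beta+(1-\beta)p(z)$ in disguise), the identical decomposition of $a_2a_4-a_3^2$, the Grenander--Szeg\"{o} lemma applied to both $p$ and $q$, and the reduction to maximizing a polynomial $F$ over $[0,2]\times[0,1]^2$ with the square-maximum at the corner $(1,1)$. Your threshold quadratic $6(1+\lambda)(2-\lambda)(1-\beta)^2-2(1+2\lambda)(1-\beta)-3(1+\lambda)=0$ is precisely the paper's condition that the interior critical point $c_{0_2}$ of $K(c)=F(c,1,1)$ passes through $c=2$ (for this even quartic the two formulations coincide), so your two regimes match the paper's Cases 4.1 and 4.2 exactly.
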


\begin{proof}
Let $f\in \mathcal{G}^{\lambda}_{\sigma}(\beta).$ Then
\begin{equation}  \label{SHD-th-p-e1}
(1-\lambda)f^{\prime}(z)+\lambda\left(1+\frac{zf^{\prime \prime
}(z)}{f^{\prime }(z)}\right )=\beta+(1-\beta)p(z)
\end{equation}
and
\begin{equation}  \label{SHD-th-p-e2}
(1-\lambda)g^{\prime}(w)+\lambda\left(1+\frac{wg^{\prime \prime
}(w)}{g^{\prime }(w)}\right )=\beta+(1-\beta)q(w),
\end{equation}
where $p, q \in \mathcal{P}$ and defined by
\begin{equation}  \label{SHD-th-p-e3}
p(z)=1+c_1z+c_2z^2+c_3z^3+\dots
\end{equation}
and
\begin{equation}  \label{SHD-th-p-e4}
q(z)=1+d_1w+d_2w^2+d_3w^3+\dots {.}
\end{equation}
It follows from (\ref{SHD-th-p-e1}), (\ref{SHD-th-p-e2}), (\ref{SHD-th-p-e3}%
) and (\ref{SHD-th-p-e4}) that
\begin{eqnarray}
2a_2 &=& (1-\beta)c_1  \label{SHD-th-p-e5} \\
3(1+\lambda)a_3-4\lambda a_2^2 &=& (1-\beta)c_2  \label{SHD-th-p-e6} \\
4(1+2\lambda)a_4-18\lambda a_2a_3+ 8\lambda a_2^3 &=& (1-\beta)c_3
\label{SHD-th-p-e7}
\end{eqnarray}
and
\begin{eqnarray}
-2a_2 &=& (1-\beta)d_1  \label{SHD-th-p-e8} \\
2(3+\lambda)a_2^2-3(1+\lambda)a_3 &=& (1-\beta)d_2  \label{SHD-th-p-e9} \\
2(10+11\lambda)a_2a_3-4(5+3\lambda)a_2^3-4(1+2\lambda)a_4 &=& (1-\beta)d_3.
\label{SHD-th-p-e10}
\end{eqnarray}
From (\ref{SHD-th-p-e5}) and (\ref{SHD-th-p-e8}), we find that
\begin{equation}  \label{SHD-th-p-e11}
c_1 =-d_1
\end{equation}
and
\begin{equation}  \label{SHD-a2}
a_2=\frac{1-\beta}{2}c_1.
\end{equation}
Now, from (\ref{SHD-th-p-e6}), (\ref{SHD-th-p-e9}) and (\ref{SHD-a2}), we
have
\begin{equation}  \label{SHD-a3}
a_3=\frac{(1-\beta)^2}{4}c_1^2+ \frac{1-\beta}{6(1+\lambda)}%
(c_2-d_2).
\end{equation}
Also, from (\ref{SHD-th-p-e7}) and (\ref{SHD-th-p-e10}), we find that
\begin{equation}  \label{SHD-a4}
a_4=\frac{5\lambda(1-\beta)^3}{16(1+2\lambda)}c_1^3 +\frac{%
5(1-\beta)^2}{24(1+\lambda)}c_1(c_2-d_2) +\frac{1-\beta}{%
8(1+2\lambda)}(c_3-d_3).
\end{equation}
Then, we can establish that
\begin{eqnarray}  \label{H2of2}
|a_2a_4-a_3^2| &=& \left |\frac{(\lambda-2)(1-\beta)^4}{%
32(1+2\lambda)}c_1^4 +\frac{(1-\beta)^3}{48(1+\lambda)}%
c_1^2(c_2-d_2)\right.  \notag \\
&& \left. \quad +\frac{(1-\beta)^2}{16(1+2\lambda)}c_1(c_3-d_3) -%
\frac{(1-\beta)^2}{36(1+\lambda)^2}(c_2-d_2)^2 \right |.
\end{eqnarray}
According to Lemma \ref{L-C2-C3} and (\ref{SHD-th-p-e11}), we write

\begin{eqnarray}
c_2-d_2 &=& \frac{(4-c_1^2)}{2}(x-y) \label{c2=d2} \\
c_3-d_3 &=& \frac{c_1^3}{2}+\frac{c_1(4-c_1^2)(x+y)}{2}-\frac{%
c_1(4-c_1^2)(x^2+y^2)}{4} \nonumber \\ && \qquad +\frac{(4-c_1^2)[(1-|x|^2)z-(1-|y|^2)w]}{2}%
\label{c3-d3}
\end{eqnarray}
for some $x,y,z$ and $w$ with $|x|\leq 1,$ $|y|\leq 1,$ $|z|\leq 1$ and $%
|w|\leq 1.$ Using (\ref{c2=d2}) and (\ref{c3-d3}) in (\ref{H2of2}), we have

\begin{eqnarray}  \label{H2-2}
|a_2a_4-a_3^2| &=& \left | \frac{(\lambda-2)(1-\beta)^4c_1^4}{32(1+2\lambda)}
+\frac{(1-\beta)^3c_1^2(4-c_1^2)(x-y)}{96(1+\lambda)} +\frac{%
(1-\beta)^2c_1}{16(1+2\lambda)} \right.  \notag \\
&& \left. \quad \times \left[\frac{c_1^3}{2}+\frac{c_1(4-c_1^2)(x+y)}{2}-%
\frac{c_1(4-c_1^2)(x^2+y^2)}{4}\right. \right.  \notag \\
&& \left. \left. \qquad +\frac{(4-c_1^2)[(1-|x|^2)z-(1-|y|^2)w]}{2}\right]-%
\frac{(1-\beta)^2(4-c_1^2)^2}{144(1+\lambda)^2}(x-y)^2 \right |  \notag \\
&\leq& \frac{(2-\lambda)(1-\beta)^4}{32(1+2\lambda)}c_1^4 +\frac{%
(1-\beta)^2c_1^4}{32(1+2\lambda)} + \frac{(1-\beta)^2c_1(4-c_1^2)}{%
16(1+2\lambda)}  \notag \\
&& +\left[\frac{(1-\beta)^3c_1^2(4-c_1^2)}{96(1+\lambda)} +\frac{%
(1-\beta)^2c_1^2(4-c_1^2)}{32(1+2\lambda)}\right](|x|+|y|)  \notag
\\
&& +\left[\frac{(1-\beta)^2c_1^2(4-c_1^2)}{64(1+2\lambda)} -\frac{%
(1-\beta)^2c_1(4-c_1^2)}{32(1+2\lambda)}\right](|x|^2+|y|^2)  \notag
\\
&& \quad +\frac{(1-\beta)^2(4-c_1^2)^2}{144(1+\lambda)^2}(|x|+|y|)^2.  \notag
\end{eqnarray}

Since $p\in \mathcal{P},$ so $|c_{1}|\leq 2.$ Letting $c_{1}=c,$ we may
assume without restriction that $c\in \lbrack 0,2].$ Thus, for $\gamma
_{1}=|x|\leq 1$ and $\gamma _{2}=|y|\leq 1,$ we obtain
\begin{equation*}
|a_{2}a_{4}-a_{3}^{2}|\leq T_{1}+T_{2}(\gamma _{1}+\gamma _{2})+T_{3}(\gamma
_{1}^{2}+\gamma _{2}^{2})+T_{4}(\gamma _{1}+\gamma _{2})^{2}=F(\gamma
_{1},\gamma _{2}),
\end{equation*}%
\begin{eqnarray*}
T_{1} &=&T_{1}(c)=\frac{(2-\lambda)(1-\beta)^{4}}{32(1+2\lambda)}c^{4}+%
\frac{(1-\beta)^{2}c^{4}}{32(1+2\lambda)}+\frac{(1-\beta)^{2}c(4-c^{2})}%
{16(1+2\lambda)}\geq 0 \\ %
T_{2} &=&T_{2}(c)=\frac{(1-\beta)^{3}c^{2}(4-c^{2})}{96(1+\lambda)}%
+\frac{(1-\beta)^{2}c^{2}(4-c^{2})}{32(1+2\lambda)}\geq 0 \\ %
T_{3} &=&T_{3}(c)=\frac{(1-\beta)^{2}c^{2}(4-c^{2})}{64(1+2\lambda)}-%
\frac{(1-\beta)^{2}c(4-c^{2})}{32(1+2\lambda)}%
\leq 0 \\
T_{4} &=&T_{4}(c)=\frac{(1-\beta)^{2}(4-c^{2})^{2}}{144(1+\lambda)^{2}}%
\geq 0.
\end{eqnarray*}%

Now we need to maximize $F(\gamma _{1},\gamma _{2})$ in the closed square $%
\mathbb{S}:=\{(\gamma _{1},\gamma _{2}):0\leq \gamma _{1}\leq 1,0\leq \gamma
_{2}\leq 1\}$ for $c\in \lbrack 0,2]$. We must investigate the maximum of $%
F(\gamma _{1},\gamma _{2})$ according to $c\in (0,2),$ $c=0$ and $c=2$
taking into account the sign of $F_{\gamma _{1}\gamma _{1}}F_{\gamma
_{2}\gamma _{2}}-(F_{\gamma _{1}\gamma _{2}})^{2}.$

Firstly, let $c\in (0,2).$ Since $T_{3}<0$ and $T_{3}+2T_{4}>0$ for $c\in
(0,2),$ we conclude that
\begin{equation*}
F_{\gamma _{1}\gamma _{1}}F_{\gamma _{2}\gamma _{2}}-(F_{\gamma _{1}\gamma
_{2}})^{2}<0.
\end{equation*}

Thus, the function $F$ cannot have a local maximum in the interior of the
square $\mathbb{S}.$ Now, we investigate the maximum of $F$ on the boundary
of the square $\mathbb{S}.$

For $\gamma_1=0$ and $0\leq \gamma_2 \leq 1$ (similarly $\gamma_2=0$ and $%
0\leq \gamma_1\leq1$) we obtain
\begin{equation*}
F(0,\gamma_2)=G(\gamma_2)=T_1+T_2\gamma_2+(T_3+T_4)\gamma_2^2 {.}
\end{equation*}

(i) The case $T_{3}+T_{4}\geq 0:$ In this case for $0<\gamma _{2}<1$ and any
fixed $c$ with $0<c<2,$ it is clear that $G^{\prime }(\gamma
_{2})=2(T_{3}+T_{4})\gamma _{2}+T_{2}>0,$ that is, $G(\gamma _{2})$ is an
increasing function. Hence, for fixed $c\in (0,2),$ the maximum of $G(\gamma
_{2})$ occurs at $\gamma _{2}=1$ and
\begin{equation*}
\max G(\gamma _{2})=G(1)=T_{1}+T_{2}+T_{3}+T_{4}.
\end{equation*}

(ii) The case $T_{3}+T_{4}<0:$ Since $T_{2}+2(T_{3}+T_{4})\geq 0$ for $%
0<\gamma _{2}<1$ and any fixed $c$ with $0<c<2,$ it is clear that $%
T_{2}+2(T_{3}+T_{4})<2(T_{3}+T_{4})\gamma _{2}+T_{2}<T_{2}$ and so $%
G^{\prime }(\gamma _{2})>0.$ Hence for fixed $c\in (0,2),$ the maximum of $%
G(\gamma _{2})$ occurs at $\gamma _{2}=1$ and also for $c=2$ we obtain
\begin{equation}\label{F}
F(\gamma_1, \gamma_2) = \frac{(1-\beta)^2}{2(1+2\lambda)} \left[%
(2-\lambda)(1-\beta)^2+1\right].
\end{equation}

Taking into account the value (\ref{F}) and the cases $i$ and $ii,$ for $%
0\leq \gamma_2 <1$ and any fixed $c$ with $0\leq c \leq 2$ we have
\begin{equation*}
\max G(\gamma_2) = G(1) = T_1+T_2+T_3+T_4.
\end{equation*}

For $\gamma_1=1$ and $0\leq \gamma_2 \leq 1$ (similarly $\gamma_2=1$ and $%
0\leq \gamma_1 \leq 1$), we obtain
\begin{equation*}
F(1,\gamma_2)=H(\gamma_2)=(T_3+T_4)\gamma_2^2 + (T_2+2T_4)\gamma_2 + T_1 +
T_2 + T_3 + T_4.
\end{equation*}

\noindent Similarly, to the above cases of $T_3+T_4,$ we get that
\begin{equation*}
\max H(\gamma_2) = H(1) = T_1+2T_2+2T_3+4T_4.
\end{equation*}

\noindent Since $G(1)\leq H(1)$ for $c\in (0,2),$ $\max F(\gamma _{1},\gamma
_{2})=F(1,1)$ on the boundary of the square $\mathbb{S}.$ Thus the maximum
of $F$ occurs at $\gamma _{1}=1$ and $\gamma _{2}=1$ in the closed square $%
\mathbb{S}.$

\noindent Let $K:(0,2)\rightarrow \mathbb{R}$
\begin{equation}
K(c)=\max F(\gamma _{1},\gamma _{2})=F(1,1)=T_{1}+2T_{2}+2T_{3}+4T_{4}.
\label{K}
\end{equation}

\noindent Substituting the values of $T_1,$ $T_2,$ $T_3$ and $T_4$ in the function $K$
defined by (\ref{K}), yields
\begin{align*}
K(c) &= \frac{(1-\beta)^2}{288(1+\lambda)^2(1+2\lambda)}\left\{%
\left[9(1-\beta)^2(1+\lambda)^2(2-\lambda) \right.\right. \\
& \left.\left.\qquad -6(1-\beta)(1+\lambda)(1+2\lambda)
-18(1+\lambda)^2+8(1+2\lambda)\right] c^4 \right. \\
& \left. \qquad + \left[24(1-\beta)(1+\lambda)(1+2\lambda)
+108(1+\lambda)^2-64(1+2\lambda)\right] c^2 \right. \\
& \qquad \left.+ 128(1+2\lambda)\right\}.
\end{align*}

\noindent Assume that $K(c)$ has a maximum value in an interior of $c\in (0,2),$ by
elementary calculation, we find
\begin{align*}
K^{\prime}(c) &= \frac{(1-\beta)^2}{72(1+\lambda)^2(1+2\lambda)}\left\{%
\left[9(1-\beta)^2(1+\lambda)^2(2-\lambda) \right.\right. \\
& \left.\left.\qquad -6(1-\beta)(1+\lambda)(1+2\lambda)
-18(1+\lambda)^2+8(1+2\lambda)\right] c^3 \right. \\
& \left. \qquad + \left[12(1-\beta)(1+\lambda)(1+2\lambda)
+54(1+\lambda)^2-32(1+2\lambda)\right] c \right\}.
\end{align*}

After some calculations we concluded the following cases:

\begin{case}
Let
\begin{equation*}
[9(1-\beta)^2(1+\lambda)^2(2-\lambda) -6(1-\beta)(1+\lambda)(1+2\lambda)
-18(1+\lambda)^2+8(1+2\lambda)]\geq 0,
\end{equation*}

\noindent that is,

\begin{equation*}
\beta \in \left[ 0,1-\frac{(1+2\lambda)+\sqrt{(1+2\lambda)^{2}+(2-\lambda)[18(1+\lambda)^2
-8(1+2\lambda)]}}{3(1+\lambda)(2-\lambda)}
\right] {.}
\end{equation*}%
Therefore $K^{\prime }(c)>0$ for $c\in (0,2).$ Since $K$ is an increasing
function in the interval $(0,2),$ maximum point of $K$ must be on the
boundary of $c\in [0,2],$ that is, $c=2.$ Thus, we have
\begin{equation*}
\max\limits_{0<c<2}K(c)=K(2)=\frac{(1-\beta )^{2}}{2(1+2\lambda)}\left[%
(2-\lambda)(1-\beta)^{2}+1\right] .
\end{equation*}
\end{case}

\begin{case}
Let
\begin{equation*}
[9(1-\beta)^2(1+\lambda)^2(2-\lambda) -6(1-\beta)(1+\lambda)(1+2\lambda)
-18(1+\lambda)^2+8(1+2\lambda)] < 0,
\end{equation*}

\noindent that is,

\begin{equation*}
\beta \in \left[1-\frac{(1+2\lambda)+\sqrt{(1+2\lambda)^{2}+(2-\lambda)[18(1+\lambda)^2
-8(1+2\lambda)]}}{3(1+\lambda)(2-\lambda)}, 1\right].
\end{equation*}
\noindent Then $K^{\prime }(c)=0$ implies the real critical point $c_{0_1}=0$ or
\begin{equation*}
c_{0_2} = \sqrt{\frac{-12(1+\lambda)(1+2\lambda)(1-\beta)%
-54(1+\lambda)^2+32(1+2\lambda)} {9(1-\beta)^2(1+\lambda)^2(2-\lambda) -6(1-\beta)(1+\lambda)(1+2\lambda)
-18(1+\lambda)^2+8(1+2\lambda)}} .
\end{equation*}
\noindent When
\begin{equation*}
\beta \in \left(1-\tfrac{(1+2\lambda)+\sqrt{(1+2\lambda)^{2}+(2-\lambda)[18(1+\lambda)^2
-8(1+2\lambda)]}}{3(1+\lambda)(2-\lambda)} \right., \left.1-
\tfrac{(1+2\lambda)+\sqrt{(1+2\lambda)^2+18(1+\lambda)^2(2-\lambda)}]}{%
6(1+\lambda)(2-\lambda)}\right].
\end{equation*}

We observe that $c_{0_2}\geq 2,$ that is, $c_{0_2}$ is out of the interval $%
(0,2).$ Therefore, the maximum value of $K(c)$ occurs at $c_{0_1}=0$ or $%
c=c_{0_2}$ which contradicts our assumption of having the maximum value at
the interior point of $c \in [0,2].$ Since $K$ is an increasing function in
the interval $(0,2),$ maximum point of $K$ must be on the boundary of
$c\in[0,2]$ that is $c=2.$ Thus, we have
\[
\max\limits_{0\leq c \leq 2} K(c) = K(2) = \frac{(1-\beta)^2}{2(1+2\lambda)}%
[1+(2-\lambda)(1-\beta)^2].
\]
\noindent When $\beta \in \left(1-
\tfrac{(1+2\lambda)+\sqrt{(1+2\lambda)^2+18(1+\lambda)^2(2-\lambda)}]}{
6(1+\lambda)(2-\lambda)}, 1\right),$ we observe
that $c_{0_2}< 2,$ that is, $c_{0_2}$ is an interior of the interval $[0,2].$
Since $K^{\prime \prime }(c_{0_2})<0,$ the maximum value of $K(c)$ occurs at
$c=c_{0_2}.$ Thus, we have
\begin{eqnarray*}
\max\limits_{0 \leq c \leq 2} K(c) &=& K(c_{0_2}) \\
&=& \frac{(1-\beta)^2}{72(1+2\lambda)}\left(\tfrac{\begin{array}{ll}%
        36[8(1+2\lambda)(2-\lambda)-(1+2\lambda)^2](1-\beta)^2%
        \\[4mm] \,\,- 324(1+\lambda)(1+2\lambda)(1-\beta)%
             +288(1+2\lambda)-729(1+\lambda)^2\end{array}}%
        {\begin{array}{ll}%
        9(1+\lambda)^2(2-\lambda)(1-\beta)^2%
        \\[4mm] \,\,- 6(1+\lambda)(1+2\lambda)(1-\beta)%
             +8(1+2\lambda)-18(1+\lambda)^2%
        \end{array}}\right)~{.} %
\end{eqnarray*}
\end{case}
\noindent This completes the proof.
\end{proof}

\begin{corollary}\label{SHD-Cor-1}
Let $f$ of the form (\ref{Int-e1}) be in $\mathcal{H}_{\sigma}^{\beta}.$ Then
\[
|a_2a_4-a_3^2| \leq
\left\{
  \begin{array}{ll}
    \frac{(1-\beta)^2[1+2(1-\beta)^2]}{2}~{;} &  \beta \in \left[0, \frac{11-\sqrt{37}}{12}\right] \\
    \frac{(1-\beta)^2[60\beta^2-84\beta-25]}{16(9\beta^2-15\beta+1)}~{;} &  \beta \in \left(\frac{11-\sqrt{37}}{12}, 1\right)
    {.}
  \end{array}
\right.
\]
\end{corollary}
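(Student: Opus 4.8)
The plan is to recognize that Corollary~\ref{SHD-Cor-1} is simply the specialization $\lambda = 0$ of Theorem~\ref{th-SHD-class}, together with a short algebraic cleanup. Indeed, by the list of special cases recorded after the Definition, we have $\mathcal{H}_\sigma^\beta = \mathcal{G}_\sigma^0\left(\frac{1+(1-2\beta)z}{1-z}\right)$, so every $f \in \mathcal{H}_\sigma^\beta$ is a member of $\mathcal{G}_\sigma^\lambda(\beta)$ with $\lambda = 0$. Consequently the two-branch bound of Theorem~\ref{th-SHD-class} applies verbatim, and it remains only to set $\lambda = 0$ everywhere and simplify.

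First I would locate the breakpoint. Substituting $\lambda = 0$ into the threshold $1 - \frac{(1+2\lambda) + \sqrt{(1+2\lambda)^2 + 18(1+\lambda)^2(2-\lambda)}}{6(1+\lambda)(2-\lambda)}$ gives $1 - \frac{1 + \sqrt{1+36}}{12} = \frac{11 - \sqrt{37}}{12}$, which is exactly the endpoint separating the two ranges of $\beta$ in the corollary. Next I would dispatch the first branch: with $\lambda = 0$ the bound $\frac{(1-\beta)^2}{2(1+2\lambda)}\left[(2-\lambda)(1-\beta)^2 + 1\right]$ collapses immediately to $\frac{(1-\beta)^2}{2}\left[1 + 2(1-\beta)^2\right]$, matching the first line with no further effort.

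The only genuinely computational step is the second branch. Putting $\lambda = 0$ into the numerator and denominator of the rational factor, I would expand the inner quadratics in $1-\beta$ and re-express them in $\beta$. The denominator $9(1+\lambda)^2(2-\lambda)(1-\beta)^2 - 6(1+\lambda)(1+2\lambda)(1-\beta) + 8(1+2\lambda) - 18(1+\lambda)^2$ becomes $18(1-\beta)^2 - 6(1-\beta) - 10 = 2(9\beta^2 - 15\beta + 1)$, while the numerator $36[8(1+2\lambda)(2-\lambda)-(1+2\lambda)^2](1-\beta)^2 - 324(1+\lambda)(1+2\lambda)(1-\beta) + 288(1+2\lambda) - 729(1+\lambda)^2$ reduces to $540(1-\beta)^2 - 324(1-\beta) - 441 = 9(60\beta^2 - 84\beta - 25)$. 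Combining with the prefactor $\frac{(1-\beta)^2}{72}$ and cancelling $\frac{9}{72 \cdot 2} = \frac{1}{16}$ yields the claimed value $\frac{(1-\beta)^2[60\beta^2 - 84\beta - 25]}{16(9\beta^2 - 15\beta + 1)}$.

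The main (and really only) obstacle is keeping these polynomial expansions free of arithmetic slips; there is no new analytic content beyond Theorem~\ref{th-SHD-class}, since the class $\mathcal{H}_\sigma^\beta$ is obtained by a single parameter choice. A sensible safeguard is to verify the reduction at one convenient value, say $\beta = 0$, where the general second-branch formula and the corollary's closed form should agree numerically, confirming that the cancellation of the factors $(1+\lambda)^2$ and $(1+2\lambda)$ has been carried out correctly.
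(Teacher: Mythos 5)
Your proposal is correct and is exactly how the paper obtains this corollary: it is stated without separate proof as the specialization $\lambda=0$ of Theorem \ref{th-SHD-class} (since $\mathcal{H}_\sigma^\beta=\mathcal{G}_\sigma^0\bigl(\tfrac{1+(1-2\beta)z}{1-z}\bigr)$), and your algebra checks out -- the breakpoint reduces to $\tfrac{11-\sqrt{37}}{12}$, the numerator to $9(60\beta^2-84\beta-25)$, and the denominator to $2(9\beta^2-15\beta+1)$, giving the stated bound after cancelling $\tfrac{9}{72\cdot 2}=\tfrac{1}{16}$. Your sanity check at $\beta=0$ is also consistent with Corollary \ref{SHD-Cor-3}.
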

\begin{corollary}\label{SHD-Cor-2}
Let $f$ of the form (\ref{Int-e1}) be in $\mathcal{K}_{\sigma}(\beta).$ Then
\[
|a_2a_4-a_3^2| \leq \frac{(1-\beta)^2}{24}\left[\frac{5\beta^2+8\beta-32}{3\beta^2-3\beta-4}\right] {.}
\]
\end{corollary}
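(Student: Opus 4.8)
The plan is to recognize Corollary \ref{SHD-Cor-2} as the single specialization $\lambda=1$ of Theorem \ref{th-SHD-class}, so that no coefficient-level work is needed: all of it was carried out in the proof of the general theorem. Indeed, by item (5) of the list following the Definition, $\mathcal{K}_{\sigma}(\beta)=\mathcal{G}^{1}_{\sigma}\!\left(\frac{1+(1-2\beta)z}{1-z}\right)$, which is exactly the class $\mathcal{G}^{\lambda}_{\sigma}(\beta)$ with $\lambda=1$. Hence Theorem \ref{th-SHD-class} applies verbatim to $f\in\mathcal{K}_{\sigma}(\beta)$, and the entire task is to substitute $\lambda=1$ into its two-branch conclusion and simplify.

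First I would evaluate the branch threshold of Theorem \ref{th-SHD-class} at $\lambda=1$. With $1+2\lambda=3$, $1+\lambda=2$ and $2-\lambda=1$, the radicand $(1+2\lambda)^2+18(1+\lambda)^2(2-\lambda)$ becomes $9+72=81$, so its square root is $9$ and the threshold is
\[
1-\frac{(1+2\lambda)+\sqrt{(1+2\lambda)^2+18(1+\lambda)^2(2-\lambda)}}{6(1+\lambda)(2-\lambda)}=1-\frac{3+9}{12}=0.
\]
Consequently the first branch of Theorem \ref{th-SHD-class}, valid on the closed interval from $0$ up to this threshold, collapses to the single point $\beta=0$, while the second branch governs the entire interval $(0,1)$. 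This collapse of the threshold to $\beta=0$ is the key structural observation that lets a single closed form replace the two-branch statement; I expect it to be the only conceptual point, the remainder being bookkeeping.

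Next I would substitute $\lambda=1$ into the second-branch rational expression and simplify, the cleanest route being the substitution $t=1-\beta$. At $\lambda=1$ the numerator polynomial reduces to $540t^2-1944t-2052=108(5t^2-18t-19)$ and the denominator polynomial to $36t^2-36t-48=12(3t^2-3t-4)$, while the outer factor $\frac{(1-\beta)^2}{72(1+2\lambda)}$ becomes $\frac{(1-\beta)^2}{216}$. Combining the numerical constants via $\frac{108}{216\cdot 12}=\frac{1}{24}$, the bound becomes $\frac{(1-\beta)^2}{24}\cdot\frac{5t^2-18t-19}{3t^2-3t-4}$. Re-expanding $t=1-\beta$ gives $5t^2-18t-19=5\beta^2+8\beta-32$ and $3t^2-3t-4=3\beta^2-3\beta-4$, which is precisely the claimed closed form.

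Finally, to cover the endpoint $\beta=0$ I would verify that the two branches agree there, so that the single displayed formula holds on all of $[0,1)$: the first branch gives $\frac{1}{2\cdot3}\big[(2-\lambda)(1)+1\big]=\frac{1}{3}$ at $\lambda=1$, and the second-branch closed form gives $\frac{1}{24}\cdot\frac{-32}{-4}=\frac{1}{3}$. Since $3\beta^2-3\beta-4<0$ throughout $[0,1)$ (its roots are $\frac{3\pm\sqrt{57}}{6}$, straddling the interval), the quotient is well defined and positive, consistent with its role as an upper bound for $|a_2a_4-a_3^2|$. This completes the reduction of Corollary \ref{SHD-Cor-2} to Theorem \ref{th-SHD-class}.
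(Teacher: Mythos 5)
Your proposal is correct and follows exactly the paper's (implicit) route: Corollary \ref{SHD-Cor-2} is just the specialization $\lambda=1$ of Theorem \ref{th-SHD-class}, where the branch threshold collapses to $0$ and the second-branch rational expression simplifies, via $t=1-\beta$, to the stated closed form. Your extra check that both branches yield $\tfrac{1}{3}$ at $\beta=0$ (consistent with Corollary \ref{SHD-Cor-4}) cleanly justifies stating a single formula on all of $[0,1)$, a detail the paper leaves unstated.
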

\begin{corollary}\label{SHD-Cor-3}
Let $f$ of the form (\ref{Int-e1}) be in $\mathcal{H}_{\sigma}.$ Then
\begin{equation*}
|a_2a_4-a_3^2| \leq \frac{3}{2}~.
\end{equation*}
\end{corollary}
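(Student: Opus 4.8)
The plan is to obtain this bound as an immediate specialization of Theorem \ref{th-SHD-class}, with no new computation required. First I would pin down the class: from the list of special cases following the Definition (items $1$ and $4$), one has $\mathcal{H}_\sigma = \mathcal{G}^0_\sigma\!\left(\frac{1+z}{1-z}\right) = \mathcal{G}^\lambda_\sigma(\beta)$ with $\lambda = 0$ and $\beta = 0$. Consequently any $f \in \mathcal{H}_\sigma$ satisfies the hypotheses of Theorem \ref{th-SHD-class} (equivalently of Corollary \ref{SHD-Cor-1}, which is already the $\lambda=0$ reduction) at these parameter values, so the sought estimate for $|a_2a_4 - a_3^2|$ follows by substitution.

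Next I would decide which branch of the piecewise bound is in force. The breakpoint in Theorem \ref{th-SHD-class} is $\beta = 1 - \frac{(1+2\lambda)+\sqrt{(1+2\lambda)^2+18(1+\lambda)^2(2-\lambda)}}{6(1+\lambda)(2-\lambda)}$; setting $\lambda = 0$ collapses this to $\frac{11-\sqrt{37}}{12}$, exactly the threshold appearing in Corollary \ref{SHD-Cor-1}. Since $\sqrt{37} < 7$ forces $\frac{11-\sqrt{37}}{12} > \frac{1}{3} > 0$, the value $\beta = 0$ lies strictly inside the first interval $\left[0, \frac{11-\sqrt{37}}{12}\right]$, so the first (polynomial) branch is the one that applies.

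Finally I would evaluate that branch. The first-branch expression in Theorem \ref{th-SHD-class} is $\frac{(1-\beta)^2}{2(1+2\lambda)}\left[(2-\lambda)(1-\beta)^2 + 1\right]$, and putting $\lambda = 0$, $\beta = 0$ gives $\frac{1}{2}\left[2 + 1\right] = \frac{3}{2}$; the same value emerges from the first branch $\frac{(1-\beta)^2[1+2(1-\beta)^2]}{2}$ of Corollary \ref{SHD-Cor-1} at $\beta = 0$. The proof is thus purely a matter of specialization, and the only step requiring any care---the nearest thing to an obstacle---is verifying that $\beta = 0$ genuinely lands in the first branch rather than the critical-point branch, which the elementary inequality $\frac{11-\sqrt{37}}{12} > 0$ settles at once.
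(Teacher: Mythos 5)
Your proposal is correct and matches the paper's (implicit) derivation exactly: the paper offers no separate proof of Corollary \ref{SHD-Cor-3}, treating it as the specialization $\lambda=0$, $\beta=0$ of Theorem \ref{th-SHD-class} (equivalently $\beta=0$ in Corollary \ref{SHD-Cor-1}), which is precisely what you do. Your extra check that $\beta=0$ lies in the first branch, via $\frac{11-\sqrt{37}}{12}>\frac{1}{3}>0$, is a small but welcome piece of diligence the paper leaves unstated.
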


\begin{corollary}\label{SHD-Cor-4}
Let $f$ of the form (\ref{Int-e1}) be in $\mathcal{K}_{\sigma}.$ Then
\begin{equation*}
|a_2a_4-a_3^2| \leq \frac{1}{3}~.
\end{equation*}
\end{corollary}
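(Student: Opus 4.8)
The plan is to recognize Corollary~\ref{SHD-Cor-4} as a direct specialization of Theorem~\ref{th-SHD-class}. The class $\mathcal{K}_{\sigma}$ of bi-convex functions is exactly $\mathcal{K}_{\sigma}(0)$, which by the identification listed in item~(5) of the special cases equals $\mathcal{G}^{1}_{\sigma}\!\left(\frac{1+z}{1-z}\right)$, i.e. the class $\mathcal{G}^{\lambda}_{\sigma}(\beta)$ with $\lambda=1$ and $\beta=0$. Consequently the entire statement will follow by substituting $\lambda=1$ and $\beta=0$ into the two-branch bound of Theorem~\ref{th-SHD-class}; no new estimation is needed, and the task reduces to deciding which branch is active and evaluating it.

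First I would determine the active branch by evaluating the threshold $1-\frac{(1+2\lambda)+\sqrt{(1+2\lambda)^2+18(1+\lambda)^2(2-\lambda)}}{6(1+\lambda)(2-\lambda)}$ at $\lambda=1$. Here $1+2\lambda=3$, $1+\lambda=2$, and $2-\lambda=1$, so the radicand equals $9+18\cdot 4\cdot 1=81$, giving $\sqrt{81}=9$, while the denominator equals $6\cdot 2\cdot 1=12$. Hence the threshold is $1-\frac{3+9}{12}=0$, so the first interval degenerates to the single point $[0,0]=\{0\}$. Since we are taking $\beta=0$, the value lands precisely at the right endpoint of this interval, placing us in the first (simpler) branch of Theorem~\ref{th-SHD-class}.

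It then remains to evaluate the first-branch expression $\frac{(1-\beta)^2}{2(1+2\lambda)}\bigl[(2-\lambda)(1-\beta)^2+1\bigr]$ at $\lambda=1$ and $\beta=0$, where $(1-\beta)^2=1$, $2(1+2\lambda)=6$, and $(2-\lambda)(1-\beta)^2=1$; this gives $\frac{1}{6}(1+1)=\frac{1}{3}$, the asserted bound. The only point requiring care—the main (and only) obstacle—is confirming that the threshold evaluates exactly to $0$, so that $\beta=0$ falls into the first case rather than the more involved second case. This is a purely arithmetic check; as a consistency test one may also evaluate the second-branch expression at the boundary $\beta=0$, $\lambda=1$, and verify that it likewise reduces to $\frac{1}{3}$, reflecting the continuity of the bound across the two branches.
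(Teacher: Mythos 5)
Your proposal is correct and follows exactly the route the paper intends: $\mathcal{K}_{\sigma}=\mathcal{K}_{\sigma}(0)=\mathcal{G}^{1}_{\sigma}(0)$, and substituting $\lambda=1$, $\beta=0$ into Theorem~\ref{th-SHD-class} (the threshold indeed evaluates to $0$, so $\beta=0$ sits in the closed first branch, which yields $\tfrac{1}{6}(1+1)=\tfrac{1}{3}$). Your consistency check also matches the paper's own intermediate step, since Corollary~\ref{SHD-Cor-2} at $\beta=0$ gives $\tfrac{1}{24}\cdot\tfrac{-32}{-4}=\tfrac{1}{3}$ as well.
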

%


\end{document}